\theoremstyle{plain}
\newtheorem{lem}{Lemma}
\newtheorem{thm}{Theorem}
\newtheorem*{thm*}{Theorem}
\newtheorem{prop}{Proposition}
\newtheorem{cor}{Corollary}
\theoremstyle{definition}
\newtheorem{rem}{Remark}
\newtheorem*{proof1}{Proof of Theorem \ref{ch2}}
\newtheorem*{proof2}{Proof of Theorem \ref{qpnn1}}
\newtheorem*{proof3}{Proof of Theorem \ref{eta2pq}}
\newtheorem*{proof4}{Proof of Theorem \ref{off}}
\newtheorem*{proof5}{Proof of Theorem \ref{eta111}}
\newtheorem*{proof6}{Proof of Theorem \ref{spiess}}
\def\R{\mathbb R}
\def\Q{\mathbb Q}
\def\P{\mathcal P}
\def\F{\mathcal F}
\def\Sym{\operatorname{Sym}}
\def\QSy{\operatorname{QSym}}
\def\zt{\zeta}
\def\la{\lambda}
\def\Si{\Sigma}
\begin{document}
\title{Harmonic-Number Summation Identities, Symmetric Functions,
and Multiple Zeta Values}
\author{Michael E. Hoffman \\
\small Dept. of Mathematics \\[-0.8ex]
\small U. S. Naval Academy, Annapolis, MD 21402 USA\\[-0.8ex]
\small \texttt{meh@usna.edu}}
\date{November 24, 2015 \\
\small Keywords:  generalized harmonic numbers, multiple zeta values,
symmetric functions, quasi-symmetric functions\\
\small Mathematics subject classifications:  11M32, 05A05, 11B85}
\maketitle
\begin{abstract}
We show how infinite series of a certain type involving generalized 
harmonic numbers can be computed using a knowledge of symmetric 
functions and multiple zeta values.
In particular, we prove and generalize some identities recently 
conjectured by J. Choi, and give several more families of identities
of a similar nature.
\end{abstract}
\section{Introduction}
Let $H_n^{(r)}$ denote the generalized harmonic number $\sum_{j=1}^n\frac1{n^r}$; 
if $r=1$ we omit the superscript.
This paper is concerned with series of the form
\begin{equation}
\label{tsum}
\sum_{n=1}^\infty \frac{F(H_n,H_n^{(2)},\dots,H_n^{(j)})}
{n^{s_1}(n+1)^{s_2}\cdots (n+k-1)^{s_k}} ,
\end{equation}
where $F(x_1,\dots,x_j)\in\Q[x_1,\dots,x_j]$ and $s_1,\dots,s_k$ are 
nonnegative integers with $s_1+\dots+s_k\ge 2$.
There are many interesting identities giving closed forms for such
sums, starting with the formulas
\begin{equation}
\label{eulers}
\sum_{n=1}^\infty \frac{H_n}{n^2}=2\zt(3)\quad\text{and}\quad
\sum_{n=1}^\infty \frac{H_n}{n^3}=\frac54\zt(4),
\end{equation}
both due to Euler \cite{E}.  Many similar formulas have been established
since, and there is an extensive literature; see, e.g., 
\cite{BB, ChCh, Ch, Con, FS, Me, S, So, Zh}.
\par
In the 1990's, multiple zeta values were introduced by the author \cite{H1}
and D. Zagier \cite{Z}.  These are defined by
\begin{equation}
\label{mzv}
\zt(i_1,i_2,\dots,i_k)=\sum_{n_1>n_2>\dots>n_k\ge 1}\frac1{n_1^{i_1}n_2^{i_2}\cdots 
n_k^{i_k}}
\end{equation}
for positive integers $i_1,i_2,\dots,i_k$ with $i_1>1$.  
For integer $s\ge 2$, any sum of the form
\[
\sum_{n=1}^\infty \frac{H_n^{(r)}}{n^s},
\]
which includes Euler's examples (\ref{eulers}), is readily expressible 
in terms of multiple zeta values as $\zt(r+s)+\zt(s,r)$.  
In recent decades intensive study has led to the development of an 
extensive theory of multiple zeta values, which we summarize in \S3 below.
The point of this paper is that sums of form (\ref{tsum}) can be
expressed in terms of multiple zeta values, and using some facts
about symmetric functions and multiple zeta values often allows such
expressions to be put in a particularly simple form.
\par
Recently J. Choi \cite[Corollary 3]{C} proved a sequence of identities:
\begin{align}
\label{cho11}
\sum_{n=1}^\infty \frac{H_n}{(n+1)(n+2)}&=1\\
\label{cho12}
\frac12\sum_{n=1}^\infty \frac{H_n^2-H_n^{(2)}}{(n+1)(n+2)}&=1\\
\label{cho13}
\frac16\sum_{n=1}^\infty \frac{H_n^3-3H_nH_n^{(2)}+2H_n^{(3)}}{(n+1)(n+2)}&=1\\
\label{cho14}
\frac1{24}\sum_{n=1}^\infty \frac{H_n^4-6H_n^2H_n^{(2)}+8H_nH_n^{(3)}+3(H_n^{(2)})^2-6H_n^{(4)}}
{(n+1)(n+2)}&=1.
\end{align}
The sequence $P_k$ of multivariate polynomials in the numerators, which
starts
\[
P_1(x_1)=x_1,\ P_2(x_1,x_2)=\frac12(x_1^2-x_2),\ P_3(x_1,x_2,x_3)=
\frac16(x_1^3-3x_1x_2+2x_3),\dots
\]
turns out to be well-known in the theory of symmetric functions; in fact
\[
P_k(p_1,p_2,\dots,p_k)=e_k ,
\]
where $p_i$ is the $i$th power sum and $e_i$ is the $i$th elementary
symmetric function.
We discuss the $P_k$ in \S2 below.  
We also discuss another sequence of polynomials $Q_k$, which are 
simply the $P_k$ without signs, i.e.,
\[
Q_1(x_1)=x_1,\ Q_2(x_1,x_2)=\frac12(x_1^2+x_2),\ Q_3(x_1,x_2,x_3)=
\frac16(x_1^3+3x_1x_2+2x_3),\dots
\]
(The $Q_k$ express the complete symmetric functions in terms of
power sums.)
Not only is the identity 
\begin{equation}
\label{ch1}
\sum_{n=1}^\infty \frac{P_k(H_n,H_n^{(2)},\dots,H_n^{(k)})}{(n+1)(n+2)}=1
\end{equation}
true for all $k$, but in fact it has a generalization involving the
$Q_k$, of which Choi proved \cite[Corollary 5]{C} the first few cases:
\begin{align}
\label{cho21}
\sum_{n=1}^\infty \frac{H_n^2}{(n+1)(n+2)}&=1+\zt(2)\\
\label{cho22}
\frac12\sum_{n=1}^\infty \frac{H_n(H_n^2-H_n^{(2)})}{(n+1)(n+2)}&=
1+\zt(2)+\zt(3)\\
\label{cho23}
\frac16\sum_{n=1}^\infty \frac{H_n(H_n^3-3H_nH_n^{(2)}+2H_n^{(3)})}{(n+1)(n+2)}&=
1+\zt(2)+\zt(3)+\zt(4)
\end{align}
(in these identities there is an erroneous factor of 2 in \cite{C} 
which we have removed).
The general result as follows. (We make the convention
that $H_0^{(r)}=0$ for all $r$, so that the result holds if $k=l=0$.)
\begin{thm}
\label{ch2}
If $P_k,Q_k$ are the polynomials discussed above, $k,l$ nonnegative
integers, then
\begin{multline*}
\sum_{n=0}^\infty
\frac{Q_l(H_n,H_n^{(2)},\dots,H_n^{(l)})P_k(H_n,H_n^{(2)},\dots,H_n^{(k)})}
{(n+1)(n+2)}\\
=\begin{cases}
\sum_{j=0}^k\binom{k+l-j}{k+1-j}\zt(k+l-j)-\zt(l),& l\ge 2,\\
\sum_{j=0}^{k-1}\zt(k+1-j)+1,& l=1,\\
1,& l=0.
\end{cases}
\end{multline*}
\end{thm}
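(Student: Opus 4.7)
I would handle all three cases uniformly by encoding the whole sum into a bivariate generating function. Let $T(k,l)$ denote the left-hand side of the theorem, and set
\[
F(s,t) := \sum_{k,l \ge 0} s^l t^k\, T(k,l).
\]
Since $P_k(H_n,\ldots,H_n^{(k)}) = e_k(1,1/2,\ldots,1/n)$ and $Q_l(H_n,\ldots,H_n^{(l)}) = h_l(1,1/2,\ldots,1/n)$ (using the discussion in \S 2), the standard generating identities $\sum_{k\ge 0} e_k u^k = \prod_i(1+ux_i)$ and $\sum_{l\ge 0} h_l u^l = \prod_i(1-ux_i)^{-1}$ collapse the double $(k,l)$-sum inside $F$ to a single Pochhammer ratio, producing
\[
F(s,t) = \sum_{n=0}^\infty \frac{1}{(n+1)(n+2)}\prod_{i=1}^n\frac{i+t}{i-s} = \sum_{n=0}^\infty\frac{(1+t)_n}{(1-s)_n\,(n+1)(n+2)}.
\]

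My second step would be to obtain a closed form for $F(s,t)$. Using $\tfrac{1}{(n+1)(n+2)} = \int_0^1 (1-u)u^n\,du$ rewrites $F$ as $\int_0^1 (1-u)\,{}_2F_1(1+t,1;1-s;u)\,du$; Euler's hypergeometric transformation converts the ${}_2F_1$ to $(1-u)^{-1-s-t}{}_2F_1(-s-t,-s;1-s;u)$, and the collapse $(-s)_n/(1-s)_n = -s/(n-s)$ reduces the integrand to a single sum whose Beta integrals are elementary. After simplifying the Gamma-ratios one finds
\[
F(s,t) = s(s+t)\sum_{n=0}^\infty\frac{1}{(n-s)(n-s-t)(n+1-s-t)}.
\]
Peeling off the $n=0$ term gives $1/(1-s-t)$; for the $n\ge 1$ tail the three-term partial-fraction decomposition
\[
\frac{1}{(n-s)(n-s-t)(n+1-s-t)} = \frac{1}{t(t-1)(n-s)} + \frac{1}{t(n-s-t)} + \frac{1}{(1-t)(n+1-s-t)}
\]
together with the asymptotic $\sum_{n=1}^N\tfrac{1}{n-s} = H_N - \psi(1-s) - \gamma + o(1)$ folds the three formally divergent pieces into a single digamma difference. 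Combining the rational parts simplifies them to $(1+s)/(1-t)$, yielding
\[
F(s,t) = \frac{1+s}{1-t} - \frac{s(s+t)}{t(1-t)}\bigl[\psi(1-s-t) - \psi(1-s)\bigr].
\]

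My third step would be to extract $[s^l t^k]F(s,t)$. Expanding $\psi(1-s-t) - \psi(1-s)$ as a double Taylor series and using $\psi^{(r)}(1) = (-1)^{r+1}r!\,\zt(r+1)$ gives
\[
\psi(1-s-t) - \psi(1-s) = -\sum_{n\ge 1,\,m\ge 0}\binom{n+m}{m}\zt(n+m+1)\,s^m t^n.
\]
Multiplying by $s(s+t)/[t(1-t)]$, geometrically expanding $1/(1-t)$, and combining the two resulting binomial sums via Pascal's identity $\binom{r-1}{l-2} + \binom{r-1}{l-1} = \binom{r}{l-1}$ produces $\sum_{r=l}^{k+l}\binom{r}{l-1}\zt(r) - \zt(l)$ when $l\ge 2$, matching the main case after the reindexing $r = k+l-j$. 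The cases $l=0$ and $l=1$ pick up the stated constants from the rational piece $(1+s)/(1-t)$ plus (in the $l=1$ case) one of the two binomial sums, matching the degenerate answers.

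I expect the main obstacle to be the closed-form evaluation of $F(s,t)$: at the partial-fraction stage each of the three summands diverges individually, and the regularization must be executed carefully enough to isolate the digamma difference cleanly. Once $F(s,t)$ is in hand, the coefficient extraction is routine binomial bookkeeping.
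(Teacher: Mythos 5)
Your proposal is correct; I checked the main steps and they all go through. The collapse of the double $(k,l)$-sum to $\sum_{n\ge0}\frac{(1+t)_n}{(1-s)_n(n+1)(n+2)}$ is right, Euler's transformation together with $(-s)_n/(1-s)_n=-s/(n-s)$ and the Beta integral does yield $F(s,t)=s(s+t)\sum_{n\ge0}\left[(n-s)(n-s-t)(n+1-s-t)\right]^{-1}$, the three partial-fraction coefficients sum to zero so the regularized digamma limits are legitimate, the rational pieces do combine to $(1+s)/(1-t)$, and extracting $[s^lt^k]$ reproduces all three cases after the substitution $r=k+l-j$ (note $\binom{k+l-j}{k+1-j}=\binom{r}{l-1}$). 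This is, however, a genuinely different argument from the paper's. The paper stays inside the quasi-symmetric-function framework: it expands $e_kh_l=\sum_p\binom{p}{k}N_{k+l,p}$ (Lemma \ref{ehN}), evaluates the $H$-function $\eta_{0,1,1}$ on the $N_{n,m}$ via the telescoping recursion coming from Theorem \ref{nome}, uses the Sum Theorem for multiple zeta values to collapse the depth-graded sums $S_{n,k}$ to $\zeta(n)$, and closes with an induction on $j$ driven by the recursion $\eta_{0,1,1}(e_jh_{n-j})=\binom{n}{j+1}\zeta(n)+\eta_{0,1,1}(e_{j-1}h_{n-j})$. Your route is pure classical analysis: it never mentions multiple zeta values, so it is self-contained and explains a priori why only ordinary zeta values appear in the answer; the price is that it is tailored to the denominator $(n+1)(n+2)$, whereas the paper's $\eta$-machinery transfers with little change to the other denominators treated in Theorems \ref{qpnn1} through \ref{spiess}. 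If you write your version up, do make explicit the justification for interchanging the $(k,l)$-sum with the $n$-sum and for the term-by-term Beta integration (both follow from positivity of all terms for small $s,t>0$ with $s+t<1$), and record that domain of convergence before extracting Taylor coefficients.
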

\par
The denominator $(n+1)(n+2)$ appearing in Theorem \ref{ch2} can be replaced 
with other polynomials.  The analogue of
Theorem \ref{ch2} for the denominator $n(n+1)$ is especially simple.
\begin{thm}
\label{qpnn1}
For $k,l$ nonnegative integers with $k+l\ge 1$,
\[
\sum_{n=1}^\infty
\frac{Q_l(H_n,H_n^{(2)},\dots,H_n^{(l)})P_k(H_n,H_n^{(2)},\dots,H_n^{(k)})}
{n(n+1)}=\binom{k+l+1}{k+1}\zt(k+l+1).
\]
\end{thm}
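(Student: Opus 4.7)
The plan is to attack Theorem \ref{qpnn1} via a bivariate generating-function argument. Introduce
\[
G(t,s) := \sum_{n=1}^\infty \frac{E_n(t)\,H_n(s)}{n(n+1)}, \qquad E_n(t) := \prod_{i=1}^n\Bigl(1+\tfrac{t}{i}\Bigr),\ \ H_n(s) := \prod_{i=1}^n\Bigl(1-\tfrac{s}{i}\Bigr)^{-1}.
\]
Because $E_n$ and $H_n$ are the generating series of the evaluations $e_k(1,\ldots,1/n) = P_k(H_n,\ldots,H_n^{(k)})$ and $h_l(1,\ldots,1/n) = Q_l(H_n,\ldots,H_n^{(l)})$, the coefficient $[t^ks^l]G(t,s)$ is exactly the left-hand side of the theorem, so it suffices to find the closed form of $G$ and then extract coefficients.

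First I would telescope the summand: $E_n(t)H_n(s) = \prod_{i=1}^n (i+t)/(i-s) = \frac{\Gamma(1-s)\,(1+t)_n}{\Gamma(1+t)\,(1-s)_n}$. Next I would apply the Beta-integral identity
\[
\frac{\Gamma(n+1+t)}{\Gamma(n+1-s)} = \frac{1}{\Gamma(-s-t)}\int_0^1 u^{n+t}(1-u)^{-s-t-1}\,du,
\]
valid initially for $\operatorname{Re}(s+t)<0$ and continued analytically afterwards, and interchange summation with integration. The inner series $\sum_{n\ge 1}u^n/(n(n+1)) = 1 + (1-u)\log(1-u)/u$ is elementary, which splits the integral into a genuine Beta integral (contributing the constant $1$) and a parameter-derivative of a Beta integral, equal to $B(t,1-s-t)\bigl[\psi(1-s-t)-\psi(1-s)\bigr]$. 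Collapsing $\Gamma$-ratios via $\Gamma(t)/\Gamma(1+t)=1/t$ and $\Gamma(1-s-t)/\Gamma(-s-t) = -(s+t)$ produces the closed form
\[
G(t,s) = 1 - \frac{s+t}{t}\bigl[\psi(1-s-t)-\psi(1-s)\bigr],
\]
extended by analytic continuation to a neighborhood of $(s,t)=(0,0)$.

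To read off coefficients, I would substitute the classical expansion $\psi(1-z)+\gamma = -\sum_{k\ge 1}\zeta(k+1)z^k$ and use the algebraic rewriting
\[
\frac{(s+t)\bigl[(s+t)^k-s^k\bigr]}{t} = \frac{(s+t)^{k+1}-s^{k+1}}{t} - s^k = \sum_{i=0}^k\binom{k+1}{i+1}s^{k-i}t^i - s^k.
\]
The binomial piece contributes $\binom{k+l+1}{k+1}\zeta(k+l+1)$ to $[t^ks^l]G$ whenever $k+l\ge 1$. The stray $-s^k$ term, arising only at $k=0$, is reconciled by independently identifying $\sum_n h_l(1,\ldots,1/n)/(n(n+1)) = \zeta^\star(2,\{1\}^{l-1})$ and invoking the standard MZV evaluation of this star-value; this boundary case should be treated separately so the uniform formula can be stated cleanly.

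The main technical obstacle is the passage from the hypergeometric series to the digamma closed form: one must justify the interchange of sum and integral (dominated convergence for $s,t$ small and real with $s+t<0$) and then analytically continue the identity across the apparent pole at $t=0$, where the factor $1/t$ is cancelled by the vanishing of $\psi(1-s-t)-\psi(1-s)$ at $t=0$. Once the digamma identity for $G(t,s)$ is secured, the remaining bookkeeping is routine power-series arithmetic, and the binomial in the statement emerges directly from the $\binom{k+1}{i+1}$ in the expansion above.
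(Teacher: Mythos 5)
Your route is genuinely different from the paper's. The paper's proof is two lines: expand $e_kh_l$ in the functions $N_{k+l,p}$ via Lemma \ref{ehN}, note that $\eta_{1,1}(N_{n,p})$ is the sum of all multiple zeta values of weight $n+1$ and depth $p$ (Proposition \ref{eta11n}), invoke the Sum Theorem to replace each such sum by $\zeta(n+1)$, and collect the binomial coefficients with $\sum_p\binom{p}{k}=\binom{k+l+1}{k+1}$. You instead package all $(k,l)$ into the single hypergeometric generating function $G(t,s)$, evaluate it by a Beta integral plus a parameter derivative, and read off coefficients from the digamma expansion. I have checked your computation: the closed form $G(t,s)=1-\frac{s+t}{t}\bigl[\psi(1-s-t)-\psi(1-s)\bigr]$ is correct, the interchange and continuation issues are routine as you say, and the coefficient extraction is right. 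What your method buys is independence from the Sum Theorem (a nontrivial input to the paper's argument) and a uniform analytic identity; what it costs is the analytic bookkeeping.

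However, you should not try to explain away the stray $-s^k$ term: it is a genuine correction, and it shows that the displayed identity actually fails when $k=0$. Your expansion gives $[t^0s^l]G=(l+1)\zeta(l+1)-\zeta(l+1)=l\zeta(l+1)$, and the star value you propose to invoke evaluates to exactly that: $\zeta^\star(2,\{1\}^{l-1})=l\zeta(l+1)$ (for $l=1$, $\sum_n H_n/(n(n+1))=\zeta(2)$, not the claimed $2\zeta(2)$; for $l=2$, $\zeta(3)+\zeta(2,1)=2\zeta(3)$, not $3\zeta(3)$). So $\binom{k+l+1}{k+1}\zeta(k+l+1)$ is correct only for $k\ge1$; for $k=0$ one must subtract $\zeta(l+1)$. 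The same off-by-one occurs in the paper's own proof, where the $p=0$ term of $\sum_{p=0}^{k+l}\binom{p}{0}\eta_{1,1}(N_{k+l,p})$ is counted as $\zeta(k+l+1)$ even though $N_{k+l,0}=0$. Your generating function detects this correctly; state the $k=0$ case separately as $l\zeta(l+1)$ rather than forcing it into the uniform binomial formula, and your proof of the $k\ge1$ cases stands as a complete, independent argument.
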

When the denominator is $n^2$ we have the following result.
\begin{thm}
\label{eta2pq}
For the polynomials $P_k,Q_k$ discussed above, $k$ nonnegative,
\begin{align}
\label{qn2}
\sum_{n=1}^\infty
\frac{Q_k(H_n,H_n^{(2)},\dots,H_n^{(k)})}{n^2}&=(k+1)\zt(k+2)\\
\label{pn2}
\sum_{n=1}^\infty
\frac{P_k(H_n,H_n^{(2)},\dots,H_n^{(k)})}{n^2}&=\frac{k+3}{2}\zt(k+2)
+\frac12\sum_{j=2}^k\zt(j)\zt(k+2-j) .
\end{align}
\end{thm}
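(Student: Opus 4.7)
The plan is to convert both sums into multiple zeta values by exploiting the identifications from \S2, namely $P_k(H_n,\dots,H_n^{(k)})=e_k(1,\tfrac12,\dots,\tfrac1n)$ and $Q_k(H_n,\dots,H_n^{(k)})=h_k(1,\tfrac12,\dots,\tfrac1n)$, where $e_k,h_k$ are the elementary and complete homogeneous symmetric functions. Expanding each symmetric function as a nested sum over strictly (resp.\ weakly) decreasing chains and interchanging with the outer sum gives
\[
\sum_{n=1}^\infty\frac{P_k(H_n,\dots,H_n^{(k)})}{n^2}=\sum_{n\ge i_1>\cdots>i_k\ge 1}\frac{1}{n^2 i_1\cdots i_k},\qquad
\sum_{n=1}^\infty\frac{Q_k(H_n,\dots,H_n^{(k)})}{n^2}=\sum_{n\ge i_1\ge\cdots\ge i_k\ge 1}\frac{1}{n^2 i_1\cdots i_k}.
\]

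For (\ref{pn2}) I would split the outer constraint $n\ge i_1$ into the cases $n>i_1$ and $n=i_1$, obtaining
$\sum_n P_k/n^2=\zt(2,\underbrace{1,\dots,1}_{k})+\zt(3,\underbrace{1,\dots,1}_{k-1})$.
Duality of multiple zeta values (see \S3) sends these to $\zt(k+2)$ and $\zt(k+1,1)$ respectively, and substituting Euler's classical evaluation of $\zt(k+1,1)$ as a polynomial in single zeta values delivers the right-hand side of (\ref{pn2}).

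For (\ref{qn2}) the same rewriting identifies the sum with the star value $\zt^\star(2,\underbrace{1,\dots,1}_{k})$. I would package the whole family into a generating function: using $\prod_{i=1}^n(1-t/i)^{-1}=n\,B(n,1-t)$ and swapping sum and integral,
\[
\sum_{k\ge 0}t^k\sum_{n\ge 1}\frac{Q_k(H_n,\dots,H_n^{(k)})}{n^2}=\sum_{n\ge 1}\frac{1}{n^2}\prod_{i=1}^n\frac{1}{1-t/i}=\int_0^1\frac{(-\log u)\,u^{-t}}{1-u}\,du.
\]
Expanding $u^{-t}$ as a power series in $t$ and applying the standard evaluation $\int_0^1(-\log u)^{m}(1-u)^{-1}\,du=m!\,\zt(m+1)$ turns the right-hand side into $\sum_{k\ge 0}(k+1)\zt(k+2)\,t^k$, which yields (\ref{qn2}) on matching coefficients of $t^k$.

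The main obstacle is (\ref{qn2}): the resulting $\zt^\star$-value of depth $k+1$ does not reduce under duality alone, so a genuine evaluation---via the integral computation above, or via an equivalent Ohno-style $\zt^\star$-identity---is needed to collapse it to $(k+1)\zt(k+2)$. By contrast, (\ref{pn2}) is essentially bookkeeping once duality has reduced its two summands to depth at most~$2$, at which point Euler's formula closes the argument.
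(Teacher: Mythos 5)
Your argument is correct, and for equation (\ref{pn2}) it is essentially the paper's own: the paper likewise reduces the sum to $\zt(3,\underbrace{1,\dots,1}_{k-1})+\zt(k+2)$ (phrased as $S_{k+2,k}^T+\zt(k+2)$ via Theorem \ref{eta2eh}), dualizes to $\zt(k+1,1)+\zt(k+2)$, and invokes Euler's reduction (\ref{euln1}). For equation (\ref{qn2}), however, your route is genuinely different. The paper writes $h_k=\sum_{j=1}^{k}N_{k,j}$, applies $\eta_2(N_{k,j})=S_{k+2,j}^T+S_{k+2,j+1}^{[2]}$, and regroups the terms into $S_{k+2,1}+S_{k+2,2}+\dots+S_{k+2,k+1}$, each summand being $\zt(k+2)$ by Granville's sum theorem. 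You instead identify the left side with $\zt^\star(2,\underbrace{1,\dots,1}_{k})$ and evaluate the generating function $\sum_{n\ge1}n^{-2}\prod_{i=1}^{n}(1-t/i)^{-1}$ through the Beta identity $\prod_{i=1}^{n}(1-t/i)^{-1}=n\,B(n,1-t)$ and the moment formula $\int_0^1(-\log u)^m(1-u)^{-1}\,du=m!\,\zt(m+1)$; the interchanges are legitimate for $0<t<1$ by positivity, and the coefficient of $t^k$ does come out to $(k+1)\zt(k+2)$. Your analytic proof is self-contained and avoids the sum theorem entirely, which is a real economy of prerequisites; the paper's combinatorial route buys the more general evaluation of $\eta_2(e_jh_{k-j})$ in Theorem \ref{eta2eh}, of which the two halves of the present theorem are the extreme cases $j=k$ and $j=0$.

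One caution on the last step of (\ref{pn2}): Euler's classical formula reads $\zt(n,1)=\frac{n}{2}\zt(n+1)-\frac12\sum_{j=2}^{n-1}\zt(j)\zt(n+1-j)$, with a minus sign, so your derivation actually produces $\frac{k+3}{2}\zt(k+2)-\frac12\sum_{j=2}^{k}\zt(j)\zt(k+2-j)$. That is the numerically correct value (for $k=2$ it gives $\frac54\zt(4)$, consistent with $\sum_n H_n^2/n^2=\frac{17}{4}\zt(4)$ and $\sum_n H_n^{(2)}/n^2=\frac74\zt(4)$, whereas the printed right-hand side gives $\frac{15}{4}\zt(4)$); the plus signs in (\ref{euln1}) and (\ref{pn2}) appear to be typographical. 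Do not adjust your argument to match the printed sign.
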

\begin{rem}
For $k=1$, both equations give the first of Euler's formulas
(\ref{eulers}) mentioned above.
Equation (\ref{qn2}) can be deduced from \cite[Corollary 1]{CoCa};
the special cases $k=2$ and $k=3$ appear as \cite[eqn. (1.5a)]{Ch} and
\cite[eqn. (2.3f)]{Zh} respectively, and $k=4$ appears in \cite{Con}.  
The sum and difference of
equations (\ref{qn2}) and (\ref{pn2}) for $k=3$ can be recognized as 
\cite[eqn. (2.5e)]{Zh} and \cite[eqn. (2.5d)]{Zh} respectively.
\end{rem}
In fact we have a general formula for 
\[
\sum_{n=1}^\infty
\frac{Q_k(H_n,H_n^{(2)},\dots,H_n^{(k)})P_l(H_n,H_n^{(2)},\dots,H_n^{(l)})}
{n^2}
\]
in terms of multiple zeta values (Theorem \ref{eta2eh} below), but it 
can only be reduced to ordinary zeta values in certain cases.
\par
We are able to obtain results for other denominators as well,
including the following.
\begin{thm}
\label{off}
For nonnegative integers $k,l$,
\[
\sum_{n=0}^\infty\frac{Q_l(H_{n+1},H_{n+1}^{(2)},\dots,H_{n+1}^{(l)})
P_k(H_n,H_n^{(2)},\dots,H_n^{(k)})}{(n+1)^2}=\binom{l+k+1}{l}\zt(l+k+2) .
\]
\end{thm}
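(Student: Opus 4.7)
My plan is to package the identity into a single generating-function equation, then reduce it to a beta-integral computation. Introduce the series
$$H_m(s) = \prod_{i=1}^m \frac{1}{1-s/i}, \qquad E_n(t) = \prod_{i=1}^n \Bigl(1+\frac{t}{i}\Bigr),$$
which are the generating functions, in $s$ and $t$ respectively, for the complete and elementary symmetric polynomials in $1,\,1/2,\ldots,1/m$ and $1,\,1/2,\ldots,1/n$. Thus $[s^l]H_m(s) = Q_l(H_m,\ldots,H_m^{(l)})$ and $[t^k]E_n(t) = P_k(H_n,\ldots,H_n^{(k)})$, so the left side of Theorem \ref{off} equals $[s^l t^k]\Phi(s,t)$ with $\Phi(s,t) := \sum_{n=0}^\infty H_{n+1}(s)E_n(t)/(n+1)^2$. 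Using the standard expansion $\psi(1-x) = -\gamma-\sum_{N\ge 1}\zt(N+1)x^N$ together with the binomial expansion of $(s+t)^N-s^N$, one checks that the right side is $[s^l t^k]\Psi(s,t)$ with
$$\Psi(s,t) := \frac{\psi(1-s) - \psi(1-s-t)}{t},$$
so the whole theorem reduces to the closed-form identity $\Phi(s,t) = \Psi(s,t)$.

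For the central computation I will write $H_m(s)E_{m-1}(t) = m\,\Gamma(1-s)\Gamma(m+t)/[\Gamma(1+t)\Gamma(m+1-s)]$ and invoke the beta integral $\Gamma(m+t)\Gamma(1-s-t)/\Gamma(m+1-s) = \int_0^1 x^{m+t-1}(1-x)^{-s-t}\,dx$. After interchanging sum with integral and recognizing $\sum_{m\ge 1}x^m/m = -\log(1-x)$, this gives
$$\Phi(s,t) = -\frac{\Gamma(1-s)}{\Gamma(1+t)\Gamma(1-s-t)} \int_0^1 x^{t-1}(1-x)^{-s-t}\log(1-x)\,dx.$$
The remaining integral is $\partial_a B(t,1+a)$ at $a = -s-t$, which expands as $\Gamma(t)\Gamma(1-s-t)\Gamma(1-s)^{-1}[\psi(1-s-t)-\psi(1-s)]$. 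The surrounding gammas cancel cleanly against the prefactor, leaving $\Phi(s,t) = [\psi(1-s)-\psi(1-s-t)]/t = \Psi(s,t)$. Reading off the coefficient of $s^l t^k$ yields the claim.

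The step most likely to cause trouble is the interchange of summation and integration together with the gamma-function bookkeeping at the end; the first is handled by working in a neighborhood of $(s,t)=(0,0)$ where absolute convergence holds (which suffices, since each coefficient $[s^l t^k]$ on both sides is a finite expression in absolutely convergent zeta values, and we only need a formal-series identity), while the second is a careful but purely mechanical application of $\partial_a\log\Gamma(1+a) = \psi(1+a)$. Everything else, including the transition from $\Psi(s,t)$ back to the binomial-coefficient form $\binom{k+l+1}{l}\zt(k+l+2)$, is straightforward bookkeeping.
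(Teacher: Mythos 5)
Your proof is correct, but it takes a genuinely different route from the paper's. The paper expands $Q_l(H_{n+1},\dots,H_{n+1}^{(l)})$ via Lemma \ref{hn} as $\sum_{j=0}^l Q_j(H_n,\dots,H_n^{(j)})/(n+1)^{l-j}$, which turns the left-hand side into $\sum_{j}\eta_{0,2+l-j}(e_kh_j)$; it then invokes Theorem \ref{eta02eh} and shows, using Proposition \ref{pwer} and the sum theorem for multiple zeta values, that the depth-graded sums $S^{[j]}_{k+l+2,p+1}$ produced by the extra terms exactly cancel the multiple-zeta-value correction appearing in that theorem. Your argument instead packages both indices into the two-variable generating function $\Phi(s,t)$ and evaluates it in closed form as $(\psi(1-s)-\psi(1-s-t))/t$ via the product formulas $H_m(s)=\Gamma(m+1)\Gamma(1-s)/\Gamma(m+1-s)$ and $E_{m-1}(t)=\Gamma(m+t)/(\Gamma(1+t)\Gamma(m))$, a beta integral, and differentiation of $B(t,1+a)$ in $a$. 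I checked the coefficient extraction on both sides (the $t^{-1}$ terms cancel and $[s^lt^k]$ picks out $\binom{l+k+1}{k+1}\zeta(l+k+2)=\binom{l+k+1}{l}\zeta(l+k+2)$ from $N=l+k+1$) and the final gamma bookkeeping ($\Gamma(t)/\Gamma(1+t)=1/t$); both are right, and the convergence and interchange issues you flag are genuinely harmless, since $\Phi$ and $\Psi$ are analytic near the origin and agree on the open set where $\Re t>0$ and $|s|,|t|$ are small, so their Taylor coefficients at $(0,0)$ coincide. What your approach buys: it bypasses multiple zeta values entirely --- in particular it does not need Granville's sum theorem, on which the paper's proof of Theorem \ref{eta02eh} and hence of Theorem \ref{off} rests --- and it handles all pairs $(k,l)$ at once from a single closed-form identity. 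What it costs: it is analytic rather than algebraic and quasi-symmetric-function-theoretic, and it does not yield as a byproduct the finer information (such as the explicit multiple-zeta-value expression of Theorem \ref{eta02eh} for the un-shifted sum) that the paper's machinery produces along the way.
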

\begin{rem}
Several special cases of Theorem \ref{off} occur in the literature.
The case $k=l=1$ is
\[
\sum_{n=1}^\infty \frac{H_nH_{n+1}}{(n+1)^2}=3\zt(4),
\]
which appears as \cite[eqn. (1.2a)]{Ch}.  The cases $(k,l)=(2,1)$
and $(k,l)=(1,2)$ appear in \cite{Zh} as equations (2.3c) and
(2.3e) respectively. 
\end{rem}
\begin{thm}
\label{eta111}
For nonnegative integers $k,l$ with $k+l\ge 1$,
\begin{multline*}
\sum_{n=1}^\infty\frac{Q_l(H_n,H_n^{(2)},\dots,H_n^{(l)})P_k(H_n,H_n^{(2)},\dots,
H_n^{(k)})}{n(n+1)(n+2)}\\
=\begin{cases}\frac12\left[\binom{k+l+1}{k}\zt(k+l+1)-
\sum_{j=0}^k\binom{k+l-j}{l+1-j}\zt(k+l-j)-\zt(l)\right],&l\ge 2,\\
\frac12\left[(k+2)\zt(k+2)-\sum_{j=0}^{k-1}\zt(k+1-j)-1\right],&l=1,\\
\frac12(\zt(k+1)-1),& l=0.\end{cases}
\end{multline*}
\end{thm}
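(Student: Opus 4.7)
The plan is to reduce the target sum to those already evaluated in Theorems \ref{qpnn1} and \ref{ch2} by means of the partial-fraction identity
\[
\frac{1}{n(n+1)(n+2)} = \frac{1}{2}\left[\frac{1}{n(n+1)} - \frac{1}{(n+1)(n+2)}\right].
\]
Writing $f(n) = Q_l(H_n,\dots,H_n^{(l)})\,P_k(H_n,\dots,H_n^{(k)})$ for brevity, and noting that $f(n)=O((\log n)^{k+l})$ so that each piece converges absolutely, I split
\[
\sum_{n=1}^\infty \frac{f(n)}{n(n+1)(n+2)} = \frac{1}{2}\sum_{n=1}^\infty \frac{f(n)}{n(n+1)} - \frac{1}{2}\sum_{n=1}^\infty \frac{f(n)}{(n+1)(n+2)}.
\]

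Theorem \ref{qpnn1} evaluates the first of these sums directly. For the second, Theorem \ref{ch2} gives a closed form for the corresponding sum beginning at $n=0$. Because $h_l$ and $e_k$ vanish on the empty set of variables for $l\ge 1$ (resp.\ $k\ge 1$), the convention $H_0^{(r)}=0$ forces $f(0)=0$ as soon as $k+l\ge 1$. Hence extending the lower summation index from $n=1$ down to $n=0$ is cost-free, and Theorem \ref{ch2} applies verbatim.

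From here I would split into the three cases $l=0$, $l=1$, $l\ge 2$ prescribed by the piecewise form of Theorem \ref{ch2}, subtract, and collect. The case $l=0$ (which forces $k\ge 1$ from the hypothesis $k+l\ge 1$) and the case $l=1$ yield the stated closed forms immediately. The case $l\ge 2$ produces a linear combination of $\zt(k+l+1)$, the $\zt(k+l-j)$ for $0\le j\le k$, and $\zt(l)$; the main technical step is then a bookkeeping manipulation using the binomial symmetry $\binom{a}{b}=\binom{a}{a-b}$ and Pascal's rule to bring the coefficients into the form asserted by the theorem. The principal obstacle is thus purely combinatorial --- tracking signs and reorganizing binomial coefficients --- rather than any new analytic or MZV-theoretic input.
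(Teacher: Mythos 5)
Your proposal is correct and follows essentially the same route as the paper: your partial-fraction identity is precisely Proposition \ref{diff} specialized to give $\eta_{1,1,1}=\tfrac12(\eta_{1,1}-\eta_{0,1,1})$, after which the paper likewise quotes Theorems \ref{qpnn1} and \ref{ch2} and takes half the difference case by case. Your observation that the $n=0$ term vanishes once $k+l\ge 1$ correctly disposes of the only lower-limit mismatch, and the remaining combination is immediate because the three cases of Theorem \ref{ch2} line up exactly with the three cases of the claim.
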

\par
Equation (\ref{ch1}) can be generalized in another direction.
\begin{thm} For integers $k\ge 0$ and $q\ge 2$,
\label{spiess}
\[
\sum_{n=0}^\infty\frac{P_k(H_n,H_n^{(2)},\dots,H_n^{(k)})}{(n+1)(n+2)\cdots (n+q)}
=\frac1{(q-1)!}\cdot\frac1{(q-1)^{k+1}} .
\]
\end{thm}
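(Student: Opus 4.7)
My plan is to package all $P_k$ into a single generating function in a formal variable $t$, and prove the identity simultaneously for all $k$ by computing a closed form.

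The starting observation is the fact recorded in \S2 of the paper: $P_k(p_1,\dots,p_k)=e_k$. Substituting $p_r=H_n^{(r)}=\sum_{j=1}^n j^{-r}$, we obtain
\[
P_k(H_n,H_n^{(2)},\dots,H_n^{(k)})=e_k\!\left(1,\tfrac12,\dots,\tfrac1n\right),
\]
and therefore the usual generating function for elementary symmetric functions gives
\[
\sum_{k\ge 0}P_k(H_n,H_n^{(2)},\dots,H_n^{(k)})\,t^k=\prod_{j=1}^n\!\left(1+\frac{t}{j}\right)=\binom{n+t}{n}.
\]
So the theorem will follow once we show
\[
S(t):=\sum_{n=0}^\infty\frac{1}{(n+1)(n+2)\cdots(n+q)}\binom{n+t}{n}=\frac{1}{(q-1)!\,(q-1-t)},
\]
because expanding the right-hand side as a geometric series in $t/(q-1)$ and reading off the coefficient of $t^k$ gives exactly $\frac{1}{(q-1)!\,(q-1)^{k+1}}$.

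To evaluate $S(t)$, I would use the standard beta-function representation
\[
\frac{1}{(n+1)(n+2)\cdots(n+q)}=\frac{n!}{(n+q)!}=\frac{1}{(q-1)!}\int_0^1 x^{q-1}(1-x)^n\,dx,
\]
swap sum and integral (justified for $t$ small and real by positivity, then extended by analyticity in $t$), and apply the binomial series
\[
\sum_{n=0}^\infty\binom{n+t}{n}y^n=\frac{1}{(1-y)^{t+1}},\qquad |y|<1,
\]
with $y=1-x$. This collapses the integral to
\[
S(t)=\frac{1}{(q-1)!}\int_0^1 x^{q-1}\cdot x^{-(t+1)}\,dx=\frac{1}{(q-1)!}\int_0^1 x^{q-2-t}\,dx=\frac{1}{(q-1)!\,(q-1-t)},
\]
valid for $\operatorname{Re} t<q-1$, which certainly contains a neighborhood of $t=0$.

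The only steps that require any care are the interchange of summation and integration (which is trivial by monotone convergence for $0<t<q-1$, then extended to a power-series identity in $t$ around $0$) and checking that $\binom{n+t}{n}$ really does reproduce the stated polynomial generating function. Neither is a genuine obstacle; the proof is essentially a one-line generating-function computation, and extracting the coefficient of $t^k$ from $\frac{1}{(q-1)!(q-1-t)}=\frac{1}{(q-1)!(q-1)}\sum_{k\ge0}(t/(q-1))^k$ yields the claim.
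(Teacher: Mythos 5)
Your proof is correct, but it takes a genuinely different route from the paper's. The paper proves the identity by induction on $k$: the base case $k=0$ is quoted from a formula for the series $H(1,\dots,1)$ in Hoffman--Moen, and the inductive step writes $\eta_{0,1,\dots,1}(e_{k+1})$ as an iterated sum, applies the partial-fraction split $\frac1{n(n+q-1)}=\frac1{q-1}\bigl(\frac1n-\frac1{n+q-1}\bigr)$ to the innermost factor, and telescopes the inner sum down to $\frac1{q-1}\eta_{0,1,\dots,1}(e_k)$ --- an entirely elementary, integral-free argument that stays inside the $H$-function formalism. You instead bundle all $k$ at once into the generating function $\sum_k e_k(1,\frac12,\dots,\frac1n)t^k=\binom{n+t}{n}$, represent $\frac{n!}{(n+q)!}$ by a beta integral, and collapse everything with the binomial series to the closed form $\frac1{(q-1)!(q-1-t)}$; extracting the coefficient of $t^k$ finishes the proof. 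Your computations check out: the evaluation $P_k(H_n,\dots,H_n^{(k)})=e_k(1,\dots,\frac1n)$, the product formula, the beta identity, and the integral $\int_0^1x^{q-2-t}\,dx=\frac1{q-1-t}$ are all right, and the two interchanges (sum with integral, and sum over $n$ with sum over $k$) are legitimately justified by positivity of all terms for $0<t<q-1$ together with the standard fact that two power series with nonnegative coefficients agreeing on an interval have equal coefficients. What your approach buys is a self-contained, one-shot proof of all cases $k\ge0$ simultaneously (with no appeal to the external base-case formula) plus the stronger closed-form generating identity $\sum_{n\ge0}\binom{n+t}{n}/\bigl((n+1)\cdots(n+q)\bigr)=\frac1{(q-1)!(q-1-t)}$; what the paper's proof buys is complete freedom from analytic machinery and a method (telescoping of $H$-functions) that is reused elsewhere in the paper.
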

This actually follows from a result of J. Spie\ss\ \cite{S}, but we prove 
it by our own methods.  As a corollary we get the formula
\begin{equation}
\label{tail}
\sum_{n=1}^\infty\frac{P_k(H_n,H_n^{(2)},\dots,H_n^{(k)})}{n(n+1)\cdots (n+q-1)}
=\frac1{(q-1)!}\left[\zt(k+1)-\sum_{j=1}^{q-2}\frac1{j^{k+1}}\right]  
\end{equation}
for integers $k>0$ and $q\ge 2$, which generalizes the case $l=0$ of
Theorems \ref{qpnn1} and \ref{eta111}.
We note that the case $k=1$ of identity (\ref{tail}) coincides with
Theorem 1 of \cite{BC}.
\par
Our main technical tool is the introduction of a
class of functions $\eta_{s_1,\dots,s_k}$, which we call $H$-functions, 
from the quasi-symmetric functions (a superalgebra of the symmetric
functions) to the reals such that
\[
\eta_{s_1,s_2,\dots,s_k}(p_1)=\sum_{n=1}^\infty\frac{H_n}{n^{s_1}(n+1)^{s_2}
\cdots (n+k-1)^{s_k}} .
\]
Here $(s_1,s_2,\dots,s_k)$ is a sequence of nonnegative integers whose
sum is 2 or more.
We are able to express $\eta_{s_1,\dots,s_k}(u)$, for any quasi-symmetric
function $u$, in terms of multiple zeta values.
Furthermore, for three particular choices of the sequence $(s_1,\dots,s_k)$,
namely $(2)$, $(1,1)$ and $(0,1,1)$, we are able to write simple formulas 
for $\eta_{s_1,\dots,s_k}(u)$ when $u$ is a product of elementary and complete 
symmetric functions.
The proofs rely on a certain class of symmetric functions that came
up in earlier work of the author \cite{H4}, along with some results
about multiple zeta values.
By taking linear combinations of $\eta_2$, $\eta_{1,1}$, and $\eta_{0,1,1}$,
we are able to prove many more summation formulas, as discussed in \S5.
\section{Symmetric and quasi-symmetric functions}
Let $x_1,x_2,\dots$ be a countable set of indeterminates, each of 
which has degree 1.
Let $\P$ be the set of formal power series in the $x_i$ of bounded degree.
A symmetric ``function'' is an element of $u\in\P$ such that the
coefficient of any monomial $x_{i_1}^{a_1}\cdots x_{i_k}^{a_k}$ 
(with the $i_j$ distinct) in $u$ is the same as the coefficient of the 
monomial $x_1^{a_1}\cdots x_k^{a_k}$ in $u$.  
The symmetric functions form a ring $\Sym$.
For a partition $\la=(\la_1,\la_2,\dots,\la_k)$ of $n$, the monomial
symmetric function $m_{\la}$ is the ``smallest'' symmetric function
that contains the monomial $x_1^{\la_1}x_2^{\la_2}\cdots x_k^{\la_k}$.
A symmetric function of degree $n$ is a linear combination of the monomial 
symmetric functions $m_{\la}$ with $\la$ running over partitions of $n$.
Some important symmetric functions are the power sums
\[
p_k=m_{(k)}=x_1^k+x_2^k+\cdots ,
\]
the elementary symmetric functions
\[
e_k=m_{(\underbrace{\scriptstyle{1,\dots,1}}_k)}=x_1x_2\cdots x_k+x_2x_3\cdots x_{k+1}+\cdots ,
\]
and the complete symmetric functions
\[
h_k=\sum_{|\la|=n}m_{\la} .
\]
\par
An element $u\in\P$ such the the coefficient in $u$ of any monomial
$x_{i_1}^{a_1}\cdots x_{i_k}^{a_k}$ with $i_1<i_2<\dots<i_k$ is the same
as that of $x_1^{a_1}x_2^{a_2}\cdots x_k^{a_k}$ is called quasi-symmetric.
This is a weaker condition than being symmetric:  every symmetric 
function is quasi-symmetric, but there are quasi-symmetric functions like
\begin{equation}
\label{M21}
\sum_{i<j}x_i^2x_j
\end{equation}
that are not symmetric.
There is a ring $\QSy\supset\Sym$ of quasi-symmetric functions.
For any composition (ordered partition) $(a_1,\dots,a_k)$ of $n$,
the monomial symmetric function $M_{(a_1,\dots,a_k)}$ is the ``smallest''
quasi-symmetric function containing $x_1^{a_1}\cdots x_k^{a_k}$;
for example, the formal power series (\ref{M21}) is $M_{(2,1)}$.
Any quasi-symmetric function of degree $n$ is a linear combination 
of monomial quasi-symmetric functions of the same degree.
Any monomial symmetric function is a sum of monomial quasi-symmetric
functions, e.g.,
\[
m_{(1,1)}=M_{(1,1)},\quad m_{(2,1)}=M_{(2,1)}+M_{(1,2)} .
\]
\par
The ring $\Sym$ of symmetric functions is a polynomial ring in the
$e_k$, and also in the $h_k$, and also in the $p_k$.  In particular,
there are polynomials $P_n$ and $Q_n$ so that
\[
e_n=P_n(p_1,p_2,\dots,p_n)\quad\text{and}\quad h_n=Q_n(p_1,p_2,\dots,p_n) .
\]
In fact, these are exactly the polynomials that appear in the Introduction.
Explicit formulas are well-known.
\begin{prop}
For $n\ge 1$,
\begin{align}
\label{mform}
P_n(y_1,\dots,y_n)&=\sum_{m_1+2m_2+\cdots=n}\frac{(-1)^{m_2+m_4+\cdots}}{m_1!m_2!
\cdots} \left(\frac{y_1}{1}\right)^{m_1}\left(\frac{y_2}{2}\right)^{m_2}\cdots\\
Q_n(y_1,\dots,y_n)&=\sum_{m_1+2m_2+\cdots=n}\frac1{m_1!m_2!\cdots} 
\left(\frac{y_1}{1}\right)^{m_1}\left(\frac{y_2}{2}\right)^{m_2}\cdots .
\end{align}
\end{prop}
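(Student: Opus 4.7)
The plan is to prove both formulas simultaneously via the standard generating function approach. Define the generating series
\[
E(t)=\sum_{n\ge 0}e_n t^n=\prod_{i\ge 1}(1+x_i t),\qquad
H(t)=\sum_{n\ge 0}h_n t^n=\prod_{i\ge 1}\frac1{1-x_i t},
\]
which are classical identities for the elementary and complete symmetric functions. Taking logarithms termwise and expanding $\log(1\pm x_it)$ as a power series, I would swap the two summations and recognize the inner sum $\sum_i x_i^k$ as $p_k$, obtaining
\[
\log E(t)=\sum_{k\ge 1}\frac{(-1)^{k-1}}{k}p_k t^k,\qquad
\log H(t)=\sum_{k\ge 1}\frac{p_k}{k}t^k.
\]

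Next I would exponentiate and apply the exponential formula
\[
\exp\!\Bigl(\sum_{k\ge 1}a_k t^k\Bigr)=\sum_{n\ge 0}t^n\sum_{m_1+2m_2+\cdots=n}\prod_{k\ge 1}\frac{a_k^{m_k}}{m_k!},
\]
with $a_k=(-1)^{k-1}p_k/k$ in the $E(t)$ case and $a_k=p_k/k$ in the $H(t)$ case. Reading off the coefficient of $t^n$ and substituting $y_k$ for $p_k$ produces polynomials in $y_1,\dots,y_n$ that, by the uniqueness of $P_n$ and $Q_n$ as polynomials expressing $e_n$ and $h_n$ in the algebraically independent generators $p_1,p_2,\dots$, must equal $P_n$ and $Q_n$ respectively.

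The only thing to watch is the sign in the formula for $P_n$: the total sign from exponentiation is $\prod_k(-1)^{(k-1)m_k}=(-1)^{\sum_k(k-1)m_k}$, and since $(k-1)m_k$ contributes an even amount when $k$ is odd and $m_k$ when $k$ is even, this simplifies precisely to $(-1)^{m_2+m_4+\cdots}$, matching \eqref{mform}. No serious obstacle is expected; the argument is essentially bookkeeping around the Newton--Girard identities, and the main care is in the sign reduction and in justifying the formal power-series manipulations (valid because both sides lie in $\mathcal P$ and all series involved have bounded degree in each total-degree slice).
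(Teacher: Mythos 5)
Your proof is correct and follows essentially the same route as the paper: both arguments reduce to the identities $H(t)=\exp\bigl(\sum_{k\ge1}p_kt^k/k\bigr)$ and $E(t)=\exp\bigl(\sum_{k\ge1}(-1)^{k-1}p_kt^k/k\bigr)$ and then expand the exponential, the only cosmetic difference being that you obtain $\log H(t)$ by taking logarithms of the product directly while the paper integrates the relation $H'(t)=P(t)H(t)$. Your sign bookkeeping $(-1)^{\sum_k(k-1)m_k}=(-1)^{m_2+m_4+\cdots}$ is also correct.
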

\begin{proof} If 
\begin{multline*}
E(t)=\sum_{n=0}^\infty e_nt^n=\prod_{i\ge 1}(1+tx_i),\quad
H(t)=\sum_{n=0}^\infty h_nt^n=\prod_{i\ge 1}\frac1{1-tx_i},\\
P(t)=\sum_{n=1}^\infty p_nt^{n-1}=\sum_{i\ge 1}\frac{x_i}{1-tx_i}
\end{multline*}
are the respective generating functions of the elementary, complete, and
power-sum symmetric functions, then evidently $E(t)=H(-t)^{-1}$ and
$H'(t)=P(t)H(t)$.  It follows that
\[
H(t)=\exp\left(\int_0^t P(s)ds\right)\quad\text{and}\quad
E(-t)=\exp\left(-\int_0^t P(s)ds\right) ,
\]
which can be expanded out to give the conclusion.
\end{proof}
\par\noindent
\begin{rem} The polynomials $P_k$ appear in \cite{S}, where they
are denoted $P_k/k!$.
The $Q_k$ appear in \cite{ChCh}, where they are denoted $\Omega_k$,
and in \cite{CoCa}, where they are denoted $P_k$.
\end{rem}
The polynomials $P_n$ and $Q_n$ can be written as determinants 
\cite[Ch. I \S2]{M}:
\[
n!P_n(y_1,\dots,y_n)=\left|\begin{matrix}y_1 & 1 & 0 & \dots & 0\\
y_2 & y_1 & 2 & \dots & 0\\
\vdots & \vdots & \vdots & \ddots & \vdots\\
y_{n-1} & y_{n-2} & y_{n-3} & \dots & n-1\\
y_n & y_{n-1} & y_{n-2} & \dots & y_1\end{matrix}\right| 
\]
and
\[
n!Q_n(y_1,\dots,y_n)=\left|\begin{matrix}y_1 & -1 & 0 & \dots & 0\\
y_2 & y_1 & -2 & \dots & 0\\
\vdots & \vdots & \vdots & \ddots & \vdots\\
y_{n-1} & y_{n-2} & y_{n-3} & \dots & -n+1\\
y_n & x_{n-1} & y_{n-2} & \dots & y_1\end{matrix}\right| .
\]
From these formulas can be deduced some properties of the polynomials 
$P_n$ and $Q_n$.
\begin{prop} 
Let $P_n,Q_n$ be as defined above.  Then
\begin{enumerate}
\item
All the coefficients of $n!P_n$ and $n!Q_n$ are integers.
\item
$n!P_n(a,a,\dots,a)=a(a-1)\cdots (a-n+1)$ and
$n!Q_n(a,a,\dots,a)=a(a+1)\cdots (a+n-1)$.
Hence the coefficients of $P_n$ sum to 0 for $n\ge 2$,
and the coefficients of $Q_n$ sum to 1.
\end{enumerate}
\end{prop}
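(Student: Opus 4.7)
The plan naturally splits according to the two parts of the proposition. For part (1), I would argue directly from the determinantal formulas displayed just above the statement: every entry of the matrix for $n!P_n$ or $n!Q_n$ is either one of the indeterminates $y_i$ or a rational integer. Hence the Leibniz expansion of the determinant is a $\mathbb{Z}$-linear combination of monomials in $y_1,\dots,y_n$, which is all that needs to be shown.

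For part (2), my plan is to specialize the two exponential generating-function identities obtained in the proof of the preceding proposition. From $H(t)=\exp(\int_0^t P(s)\,ds)$ together with $E(-t)=H(t)^{-1}$, one has
$$\sum_{n\ge 0}Q_n(p_1,p_2,\dots)\,t^n=\exp\!\left(\sum_{n\ge 1}\frac{p_n}{n}t^n\right),\qquad \sum_{n\ge 0}P_n(p_1,p_2,\dots)\,t^n=\exp\!\left(\sum_{n\ge 1}\frac{(-1)^{n+1}p_n}{n}t^n\right).$$
Setting $p_n=a$ for every $n\ge 1$ collapses the inner sums to $-a\log(1-t)$ and $a\log(1+t)$ respectively, so the right-hand sides become the familiar binomial series $(1-t)^{-a}$ and $(1+t)^{a}$. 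Reading off the coefficient of $t^n$ and multiplying by $n!$ then yields
$$n!\,Q_n(a,\dots,a)=a(a+1)\cdots(a+n-1),\qquad n!\,P_n(a,\dots,a)=a(a-1)\cdots(a-n+1),$$
which are precisely the two product formulas claimed.

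The consequences for coefficient sums follow at once by specializing $a=1$: the identity $n!\,Q_n(1,\dots,1)=n!$ forces the coefficients of $Q_n$ to sum to $1$, while $n!\,P_n(1,\dots,1)=1\cdot 0\cdot(-1)\cdots(2-n)$ contains a zero factor as soon as $n\ge 2$.

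I do not anticipate any serious obstacle. The only place where care is required is sign bookkeeping in going from $H$ to $E$ via $E(-t)=H(t)^{-1}$, to make sure the $P_n$ specialization produces the falling factorial rather than the rising one; everything else is routine manipulation of the generating functions already in hand.
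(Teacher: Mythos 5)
Your proof is correct. Note first that the paper itself supplies no proof of this proposition: it merely asserts, just before the statement, that both parts ``can be deduced'' from the determinantal formulas for $n!P_n$ and $n!Q_n$. Your treatment of part (1) is exactly that intended deduction---the Leibniz expansion of a determinant whose entries are indeterminates and integers manifestly has integer coefficients. For part (2), however, you depart from the determinant route and instead specialize the exponential generating-function identities $H(t)=\exp\bigl(\sum_{n\ge1}\tfrac{p_n}{n}t^n\bigr)$ and $E(t)=\exp\bigl(\sum_{n\ge1}\tfrac{(-1)^{n+1}p_n}{n}t^n\bigr)$ at $p_n=a$, obtaining $(1-t)^{-a}$ and $(1+t)^a$ and reading off binomial coefficients; this is legitimate since $\Sym$ is a polynomial ring in the $p_n$, so the substitution is a well-defined homomorphism. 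The determinant route for part (2) would instead set $y_i=a$ and evaluate the resulting almost-triangular determinant by a row expansion, which amounts to verifying the Newton-type recurrence $nf_n=a\sum_{i=1}^{n}(-1)^{i-1}f_{n-i}$ for the falling factorial---slightly more computation but no generating functions. Your route is cleaner and reuses machinery already established in the proof of the preceding proposition; your sign bookkeeping ($E(-t)=H(t)^{-1}$ giving the falling rather than rising factorial) is handled correctly, and the coefficient-sum consequences at $a=1$ are immediate as you say.
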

\par
Let $a_1,\dots,a_n$ be a finite sequence of real constants.
There is a homomorphism $\QSy\to\R$ sending each $x_i$ with $i\le n$
to $a_i$, and each $x_i$ with $i>n$ to 0.
We denote the image of $u\in\QSy$ under this homomorphism by
$u(a_1,\dots,a_n)$.
\begin{lem}
\label{hn}
For positive integers $n$ and $k$,
\[
h_k(a_1,\dots,a_n,a_{n+1})=\sum_{j=0}^kh_{k-j}(a_1,\dots,a_n)a_n^j .
\]
\end{lem}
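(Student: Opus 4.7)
The plan is to derive the identity directly from the generating function for the complete symmetric functions, which the paper has already introduced in the proof of the preceding proposition, namely
\[
H(t)=\sum_{k\ge 0}h_k t^k=\prod_{i\ge 1}\frac{1}{1-tx_i}.
\]
(I interpret the right-hand side of the lemma as having $a_{n+1}^j$ rather than $a_n^j$, since otherwise evaluating at $a_{n+1}$ should not make the sum depend on $a_n$; this is almost certainly a typo.)

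First I would apply the homomorphism that sends $x_i\mapsto a_i$ for $i\le n+1$ and $x_i\mapsto 0$ for $i>n+1$ to the infinite product. Since the factors corresponding to $i>n+1$ become $1$, this gives
\[
\sum_{k\ge 0}h_k(a_1,\dots,a_{n+1})\,t^k=\prod_{i=1}^{n+1}\frac{1}{1-ta_i}=\frac{1}{1-ta_{n+1}}\cdot\prod_{i=1}^n\frac{1}{1-ta_i}.
\]
The second factor is the generating function for $h_m(a_1,\dots,a_n)$, and the first factor is the geometric series $\sum_{j\ge 0}a_{n+1}^j t^j$.

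Multiplying the two power series and reading off the coefficient of $t^k$ on both sides via the Cauchy product yields
\[
h_k(a_1,\dots,a_{n+1})=\sum_{j=0}^k h_{k-j}(a_1,\dots,a_n)\,a_{n+1}^j,
\]
which is the claimed identity. There is no real obstacle here; the only subtlety is justifying the substitution in the infinite product, but this is immediate because all but finitely many factors become $1$, so the formal power series identity is preserved under the evaluation homomorphism.
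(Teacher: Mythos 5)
Your proof is correct and is essentially identical to the paper's own argument: both split the evaluated generating function $\prod_{i=1}^{n+1}(1-ta_i)^{-1}$ into the geometric series for $a_{n+1}$ times the generating function for $h_m(a_1,\dots,a_n)$ and compare coefficients of $t^k$. Your reading of $a_n^j$ as a typo for $a_{n+1}^j$ is also right, as the paper's own proof produces $a_{n+1}^j$.
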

\begin{proof}
From the generating function $H(t)$ we have
\begin{multline*}
\sum_{j=0}^\infty t^jh_j(a_1,\dots,a_{n+1})=\prod_{i=1}^{n+1}\frac1{1-a_it}
=(1+a_{n+1}t+a_{n+1}^2t^2+\cdots)\prod_{i=1}^n\frac1{1-a_it}\\
=(1+a_{n+1}t+a_{n+1}^2t^2+\cdots)\sum_{j=0}^\infty t^jh_j(a_1,\dots,a_n),
\end{multline*}
and the conclusion follows by considering the coefficient of $t^k$.
\end{proof}
\par
As in \cite{H4}, let
\[
N_{n,m}=\sum_{\text{partitions $\la$ of $n$ with $m$ parts}}m_{\la}=
\sum_{\text{compositions $I$ of $n$ with $m$ parts}}M_I 
\]
for $n\ge m$.
The $N_{n,m}$ also appear in \cite[Ch. I, \S2, ex. 19]{M}, where
they are denoted $p_n^{(m)}$.
We shall need the following result in \S4.
\begin{lem}  
\label{ehN}
For $0\le j\le k$,
\[
e_jh_{k-j}=\sum_{p=j}^k \binom{p}{j}N_{k,p} .
\]
\end{lem}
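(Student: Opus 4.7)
The plan is to expand both sides in the basis of monomial quasi-symmetric functions $M_I$ and verify that the coefficients agree. Since $N_{k,p}$ is by definition the sum of $M_I$ over all compositions $I$ of $k$ with exactly $p$ parts, it suffices to show that for each composition $I = (a_1, \dots, a_p)$ of $k$, the coefficient of $M_I$ in $e_j h_{k-j}$ is precisely $\binom{p}{j}$.

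To compute this coefficient, I would fix the leading representative monomial $x_1^{a_1} x_2^{a_2} \cdots x_p^{a_p}$ of $M_I$ and count in how many ways it arises as a product of one term from $e_j$ with one term from $h_{k-j}$. Because $e_j = \sum_{i_1 < i_2 < \cdots < i_j} x_{i_1} x_{i_2} \cdots x_{i_j}$ contributes only squarefree monomials of degree $j$, any such contribution must take the form $\prod_{s \in S} x_s$ for some $j$-element subset $S \subseteq \{1, 2, \dots, p\}$. The complementary factor
$$\frac{x_1^{a_1} x_2^{a_2} \cdots x_p^{a_p}}{\prod_{s \in S} x_s}$$
is a monomial of degree $k-j$ supported in $\{x_1, \dots, x_p\}$, and every such monomial automatically appears (with coefficient $1$) in $h_{k-j}$. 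Since each $a_s \ge 1$, no choice of $S$ is ruled out by divisibility, so the number of valid decompositions is $\binom{p}{j}$.

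Summing over all compositions $I$ of $k$ with exactly $p$ parts and then over $p$ therefore yields
$$e_j h_{k-j} = \sum_{p} \binom{p}{j} \sum_{I:\,p(I)=p} M_I = \sum_{p=j}^{k} \binom{p}{j} N_{k,p},$$
where the range $j \le p \le k$ is forced by $\binom{p}{j}=0$ for $p<j$ and by the fact that a composition of $k$ has at most $k$ parts. The argument is essentially a one-step combinatorial identification, so I do not anticipate a genuine obstacle; the only care required is bookkeeping the subset-selection to see that the count is independent of the specific exponents $a_s$ (which it is, precisely because each $a_s \ge 1$).
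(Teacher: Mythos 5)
Your proof is correct, and it takes a genuinely different route from the paper. The paper works with the generating function $\F(t,s)=1+\sum_{n\ge m\ge 1}N_{n,m}t^ns^m$, shows $\F(t,s)=H(t)E((s-1)t)$, extracts from this the inverse relation expressing each $N_{k,j}$ as a signed binomial combination of the products $e_{j+p}h_{k-j-p}$, and then backsolves that triangular system to obtain the stated identity. You instead prove the identity directly by coefficient extraction in the monomial quasi-symmetric basis: since $e_jh_{k-j}$ is (quasi-)symmetric, it suffices to compute the coefficient of the leading monomial $x_1^{a_1}\cdots x_p^{a_p}$ of $M_I$ for each composition $I=(a_1,\dots,a_p)$ of $k$, and your count of $\binom{p}{j}$ decompositions (choose the $j$-element support $S\subseteq\{1,\dots,p\}$ of the squarefree factor from $e_j$; the complementary degree-$(k-j)$ monomial occurs with coefficient $1$ in $h_{k-j}$, and no choice of $S$ is excluded because every $a_s\ge 1$) is exactly right, including the observation that the count is independent of the exponents. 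Your argument is more elementary and self-contained, avoiding both the generating-function manipulation and the inversion step; the paper's approach has the side benefit of producing the inverse formula for $N_{k,j}$ in terms of the $e_ih_m$ along the way and of reusing machinery already established in the author's earlier work.
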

\begin{proof}
Let
\[
\F(t,s)=1+\sum_{n\ge m\ge 1}N_{n,m}t^ns^m\in\Sym[[t,s]]
\]
be the generating function of the $N_{m,n}$.  Then
as in \cite[Lemma 1]{H4}, we have
\[
\F(t,s)=\prod_{i\ge 1}\left(1+\frac{stx_i}{1-tx_i}\right)
=\prod_{i\ge 1}\frac{1+(s-1)tx_i}{1-tx_i}
=H(t)E((s-1)t) .
\]
From this follows (cf. \cite[Lemma 2]{H4}, \cite[loc. cit.]{M})
\[
N_{k,j}=\sum_{p=0}^{k-j}\binom{j+p}{j}(-1)^je_{j+p}h_{k-j-p},
\]
and the system of equations for fixed $k$ can be backsolved to
give the conclusion.
\end{proof}
\section{Multiple zeta values}
Multiple zeta values $\zt(i_1,i_2,\dots,i_k)$ are defined by equation
(\ref{mzv}) above.
We refer to $i_1+\dots+i_k$ as the weight of this multiple zeta value,
and $k$ as its depth.  Multiple zeta values of arbitrary depth were
introduced by the author \cite{H1} and D. Zagier \cite{Z}, though the
depth 2 case was already studied by Euler \cite{E}.
\par
It is evident that multiple zeta values are related to 
quasi-symmetric functions.  In the notation introduced in the
last section,
\begin{equation}
\label{limit}
\zt(i_k,i_{k-1},\dots,i_1)=
\lim_{n\to\infty}M_{(i_1,\dots,i_k)}(1,\frac12,\dots,\frac1{n})
\end{equation}
for any composition $(i_1,\dots,i_k)$ with $i_k\ge 2$.
There is a subalgebra $\QSy^0$ of $\QSy$ generated by all the monomial 
quasi-symmetric functions $M_{(i_1,\dots,i_k)}$ with $i_k>1$, and we
can define a homomorphism $\zt:\QSy^0\to\R$ that sends $1\in\QSy^0$
to $1\in\R$ and $M_{(i_1,\dots,i_k)}$, $i_k>1$, to (\ref{limit})
(See \cite{H3} for a detailed discussion).
We write $\Sym^0$ for the subalgebra $\QSy^0\cap\Sym$ of $\Sym$; if we
think of $\Sym$ as the polynomial algebra on the $p_i$, then
$\Sym^0$ is the subalgebra generated by $p_2,p_3,\dots$.
\par
One of the first major results in the modern theory of multiple 
zeta values is the following ``sum theorem'', conjectured by C. Moen
(see \cite{H1}) and proved by A. Granville \cite{G}.
\begin{thm*}[Sum Theorem]
The sum of all multiple zeta values of weight $n$ and depth $d\le n-1$ is
$\zt(n)$.
\end{thm*}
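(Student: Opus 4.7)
My plan is to prove the Sum Theorem along the lines of Granville's original argument \cite{G}, by reformulating the sum as a coefficient in a generating function and then reducing it to a single zeta value via partial fractions. Let
\[
S(n,d) := \sum_{\substack{|I| = n,\ \text{depth}(I) = d \\ i_1 \ge 2}} \zt(I);
\]
we must show $S(n,d) = \zt(n)$ for each $1 \le d \le n-1$, the case $d=1$ being immediate.

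The first move is to interchange the order of summation in the defining double sum. For fixed $n_1 > n_2 > \cdots > n_d \ge 1$, the change of variables $j_1 = i_1 - 2$, $j_k = i_k - 1$ for $k \ge 2$ turns the inner sum over admissible compositions of $n$ into a sum over nonnegative compositions of $n - d - 1$; using $\sum_{m \ge 0}(t/n)^m = n/(n-t)$, this inner sum is the coefficient of $t^{n-d-1}$ in $\bigl(n_1(n_1-t)(n_2-t)\cdots(n_d-t)\bigr)^{-1}$. Hence
\[
S(n,d) = [t^{n-d-1}] \sum_{n_1 > n_2 > \cdots > n_d \ge 1} \frac{1}{n_1(n_1 - t)(n_2 - t) \cdots (n_d - t)}.
\]

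To evaluate the remaining multi-sum I would expand $\prod_{j=1}^d (n_j - t)^{-1}$ by partial fractions in $t$, write the constraint $n_1 > \cdots > n_d$ as a symmetrization over distinct $n_j$ divided by $d!$, and interchange sums once more. The Vandermonde denominators $\prod_{k \ne j}(n_j - n_k)^{-1}$ should then combine to collapse the expression to a single series whose coefficient extraction produces $\zt(n)$, independently of $d$. The main obstacle is that the individual pieces produced by the partial-fraction split are \emph{not} absolutely convergent even though the original sum is; one must keep the divergent tails paired until the cancellations complete. An alternative route, which avoids this subtlety, is to pass to the iterated-integral representation of multiple zeta values and exploit the identity $\frac{dt}{t} + \frac{dt}{1-t} = \frac{dt}{t(1-t)}$: marking the $\omega_1$ positions by a formal variable $y$ and specializing appropriately produces a single integrand on the open simplex that can be read off as $\zt(n)$. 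Either route, carried out carefully, yields the theorem.
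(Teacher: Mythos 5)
First, a point of reference: the paper does not prove the Sum Theorem at all --- it is quoted as a known result, conjectured by Moen and proved by Granville \cite{G} --- so there is no in-paper argument to compare yours against. Judged on its own, what you have written is a plan rather than a proof. Your opening reduction is correct and is in fact the first step of Granville's argument: writing $i_1=j_1+2$, $i_k=j_k+1$ and summing the resulting geometric series does give
\[
S(n,d)=[t^{n-d-1}]\sum_{n_1>n_2>\cdots>n_d\ge 1}
\frac1{n_1(n_1-t)(n_2-t)\cdots(n_d-t)} .
\]
But everything after this display is conditional (``should then combine to collapse,'' ``carried out carefully, yields the theorem''), and the specific continuation you propose does not work as stated. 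The summand above is \emph{not} symmetric in $n_1,\dots,n_d$ --- the variable $n_1$ carries the extra factor $1/n_1$ --- so you cannot convert the ordered sum into $1/d!$ times a sum over distinct tuples. Moreover the convergence obstacle you flag is not a technicality to be deferred: after the partial-fraction split every individual piece diverges, and arranging the cancellation \emph{is} the content of the proof. What Granville actually does at this point is different: he fixes $n_1=N$, recognizes the inner sum over $N>n_2>\cdots>n_d\ge1$ as the elementary symmetric function $[s^{d-1}]\prod_{m=1}^{N-1}\bigl(1+\frac{s}{m-t}\bigr)$, and then evaluates $\sum_{N\ge1}\frac1{N(N-t)}\prod_{m=1}^{N-1}\frac{m-t+s}{m-t}$ in closed form by a telescoping identity whose right-hand side is symmetric enough in $s$ and $t$ to read off all coefficients at once. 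None of that is present in your sketch.

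The ``alternative route'' is likewise not a proof. The identity $\frac{dt}{t}+\frac{dt}{1-t}=\frac{dt}{t(1-t)}$ lets you sum the iterated integrals over \emph{all} admissible words of weight $n$, which yields only $\sum_{d=1}^{n-1}S(n,d)=(n-1)\zt(n)$ --- a strictly weaker statement than the theorem. To isolate a fixed depth $d$ you must mark the $\om_1$ positions by $s$ and show that the weight-$n$ integral built from $\frac{dt}{t}+\frac{s\,dt}{1-t}$ equals $(s+s^2+\cdots+s^{n-1})\zt(n)$; that assertion is equivalent to the Sum Theorem itself and you have not evaluated the integral. So there is a genuine gap on both routes: the closed-form evaluation that constitutes the heart of the argument is missing.
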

Call a composition $(i_1,\dots,i_k)$ ``admissible'' if $i_1>1$.
The following result was proved in \cite{H1}.
\begin{thm*}[Derivation Theorem]
For any admissible compositon $(i_1,\dots,i_k)$,
\begin{multline*}
\sum_{j=1}^k\zt(i_1,\dots,i_{j-1},i_j+1,i_{j+1},\dots,i_k)=\\
\sum_{j=1}^k\sum_{p=1}^{i_j-1}\zt(i_1,\dots,i_{j-1},i_j-p+1,p,i_{j+1},\dots,i_k).
\end{multline*}
\end{thm*}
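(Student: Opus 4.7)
The plan is to encode compositions as words in the noncommutative algebra $\mathfrak{H}=\Q\langle x,y\rangle$ via $I=(i_1,\ldots,i_k)\mapsto w(I)=x^{i_1-1}y\cdots x^{i_k-1}y$, so that admissible compositions (those with $i_1>1$) correspond to words starting with $x$ and ending with $y$, and $\zt$ extends to a linear map on the admissible subalgebra via the standard iterated-integral representation with $x\leftrightarrow\omega_0=dt/t$ and $y\leftrightarrow\omega_1=dt/(1-t)$, which converges precisely for admissible words.

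The first step is to introduce the derivation $\partial$ on $\mathfrak{H}$ defined on generators by $\partial(x)=xy$ and $\partial(y)=-xy$, extended by the Leibniz rule. For an admissible $w(I)$, applying $\partial$ one letter at a time produces two kinds of contributions: differentiating the trailing $y$ of the $j$-th block $x^{i_j-1}y$ lengthens that block by one $x$ and contributes $-w(i_1,\ldots,i_j+1,\ldots,i_k)$; differentiating the $p$-th $x$ inside block $j$ inserts a $y$ that splits the block into pieces of sizes $p+1$ and $i_j-p$, contributing $+w(i_1,\ldots,i_j-p+1,p,\ldots,i_k)$ after the reindexing $p\mapsto i_j-p$ that matches the theorem's notation. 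Summing, $\partial(w(I))$ equals the RHS word-sum of the theorem minus the LHS word-sum, so the Derivation Theorem reduces to
\[
\zt(\partial(w))=0\quad\text{for every admissible word $w$.}
\]

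The second step is to verify this annihilation. The key algebraic observation is that $\partial(x)+\partial(y)=0$, which in the iterated-integral picture reflects the exactness $\omega_0+\omega_1=d\log(t/(1-t))$. Grouping the terms of $\partial(w)$ around each pair of adjacent letters of $w$ and integrating by parts in the corresponding integration variable, the exact-form insertions telescope across the simplex: contributions from interior walls cancel pairwise, leaving only boundary contributions at the two extreme ends of the integration domain. Admissibility --- the first letter of $w$ being $x$ and the last being $y$ --- is precisely what kills those two remaining boundary terms, so $\zt\circ\partial\equiv 0$ on admissible words.

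Combining the two steps gives the Derivation Theorem. The main technical hurdle is Step 2: carrying out the integration-by-parts cleanly, bookkeeping the signs inherited from $\partial$, and verifying that admissibility is exactly the condition needed to eliminate all surviving boundary contributions.
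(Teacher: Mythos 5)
The paper offers no proof of this statement; it is imported verbatim from \cite{H1}, where it is proved by a series-side partial-fraction argument. Judged on its own terms, your Step 1 is correct and is the standard reformulation: with the encoding $I\mapsto w(I)=x^{i_1-1}y\cdots x^{i_k-1}y$ and the derivation $\partial$ of $\Q\langle x,y\rangle$ determined by $\partial(x)=xy$, $\partial(y)=-xy$, one checks exactly as you do that $\partial(w(I))$ equals the right-hand side minus the left-hand side of the theorem (and all words produced are admissible), so the theorem is equivalent to the assertion $\zt(\partial(w))=0$ for every admissible word $w$ --- Hoffman's relation, i.e.\ the $n=1$ case of the derivation relations. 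Your sign and reindexing bookkeeping there is right.

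The gap is in Step 2, and it is fatal as written. Track where $\partial$ actually inserts letters: differentiating $u_i=x$ inserts a $y$ at the wall between positions $i$ and $i+1$, while differentiating $u_{i+1}=y$ inserts $-x$ at that \emph{same} wall. Hence the form deposited at an interior wall is $\omega_1$, $\omega_1-\omega_0$, $0$, or $-\omega_0$ according as the adjacent letters are $xx$, $xy$, $yx$, $yy$. Each of these is of course exact, but their potentials differ from wall to wall, so integrating out the inserted variable yields factors $f_i(t_i)-f_i(t_{i+1})$ with $f_i$ depending on $i$: nothing telescopes and interior walls do not cancel pairwise. The smallest case already refutes the proposed mechanism: for $w=xy$ there is exactly one interior wall, the two boundary walls contribute nothing, and that lone wall contributes $\zt(xyy)-\zt(xxy)=\zt(2,1)-\zt(3)$, whose vanishing is Euler's identity --- the entire content of the theorem in that case --- not a boundary-term bookkeeping fact. (Note also that the potential of $\omega_0+\omega_1=d\log(t/(1-t))$ diverges at both endpoints, so even a genuine telescoping argument would have to confront regularization rather than simply ``killing'' boundary terms by admissibility.) To close Step 2 you need a real proof that $\zt\circ\partial=0$: either Hoffman's original partial-fraction manipulation of the defining series as in \cite{H1}, or the regularized double-shuffle route, which identifies $\partial(w)$ with the difference between the shuffle product and the harmonic (stuffle) product of $y$ with $w$ and then uses the fact that the two regularizations of $\zt(1)\zt(w)$ agree.
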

Another important result on multiple zeta values is the duality
theorem.  It was actually conjectured in \cite{H1}, but the proof
comes easily from a description of multiple zeta values as iterated
integrals (see \cite{Z} or \cite{H2}).  To describe it requires some 
definitions.
Let $\Si$ be the function that takes a composition to its sequence
of partial sums:
\[
\Si(i_1,\dots,i_k)=(i_1,i_1+i_2,\dots,i_1+\dots+i_k).
\]
On the set $I_n$ of increasing integer sequences chosen from the set 
$\{1,\dots,n\}$, there are functions $R_n:I_n\to I_n$ and $C_n:I_n\to I_n$ 
given by
\begin{align*}
R_n(s_1,\dots,s_k)&=(n+1-s_k,n+1-s_{k-1},\dots,n+1-s_1)\\
C_n(s_1,\dots,s_k)&=\text{complement of $\{s_1,\dots,s_k\}$ in $\{1,\dots,n\}$}
\end{align*}
For a composition $(i_1,\dots,i_k)$ of weight $n$, let
\[
\tau(i_1,\dots,i_k)=\Si^{-1}R_nC_n\Si(i_1,\dots,i_k).
\]
Then $\tau(I)$ is admissible if $I$ is, and we have the following result.
\begin{thm*}[Duality Theorem] For any admissible composition $I$, 
$\zt(\tau(I))=\zt(I)$.
\end{thm*}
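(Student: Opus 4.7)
The plan is to prove the Duality Theorem using the Kontsevich iterated-integral representation of multiple zeta values. For an admissible composition $I = (i_1, \ldots, i_k)$ of weight $n$, this representation reads
\[
\zt(I) = \int_{1 > t_1 > \cdots > t_n > 0} f_{w_1}(t_1) f_{w_2}(t_2) \cdots f_{w_n}(t_n) \, dt_1 \cdots dt_n,
\]
where $f_0(t) = 1/t$, $f_1(t) = 1/(1-t)$, and the word $w \in \{0,1\}^n$ is $0^{i_1-1}1\,0^{i_2-1}1\cdots 0^{i_k-1}1$, so that the positions of its $1$'s are precisely the entries of $\Si(I)$. Admissibility $i_1 > 1$ corresponds to $w_1 = 0$, which makes the outer integral converge at $t_1 = 1$. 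This representation is standard (see \cite{Z} or \cite{H2}) and is proved by expanding each $1/(1-t_j)$ as a geometric series and interchanging summation with integration; I would simply cite it.

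The main step is then the involution $u_j = 1 - t_{n+1-j}$, which maps the open simplex $1 > t_1 > \cdots > t_n > 0$ bijectively onto the corresponding simplex in $u$-coordinates with Jacobian of absolute value $1$. Because $f_0(1-u) = f_1(u)$ and $f_1(1-u) = f_0(u)$, the Lebesgue change-of-variables formula gives
\[
\zt(I) = \int_{1 > u_1 > \cdots > u_n > 0} f_{1-w_n}(u_1) f_{1-w_{n-1}}(u_2) \cdots f_{1-w_1}(u_n) \, du_1 \cdots du_n,
\]
which is the iterated-integral representation of $\zt(I')$, where $I'$ is the composition whose associated word is $\tilde w$ defined by $\tilde w_k = 1 - w_{n+1-k}$---that is, $w$ reversed and then bit-flipped. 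Using Lebesgue integration rather than oriented differential forms lets me sidestep all sign tracking, which would otherwise be the messiest part of the bookkeeping.

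It remains to identify $I'$ with $\tau(I)$. The positions of the $1$'s of $\tilde w$ are $\{k : w_{n+1-k} = 0\} = \{n+1-k' : k' \in C_n \Si(I)\} = R_n C_n \Si(I)$, so $\Si(I') = R_n C_n \Si(I)$, and hence $I' = \Si^{-1} R_n C_n \Si(I) = \tau(I)$. Admissibility of $\tau(I)$ follows because $w_n = 1$ always (forcing $\tilde w_1 = 0$) and $w_1 = 0$ by admissibility of $I$ (forcing $\tilde w_n = 1$). The heart of the argument is the symmetry $t \leftrightarrow 1-t$ exchanging $\omega_0$ and $\omega_1$; everything else is combinatorial bookkeeping, and the main obstacle is really just setting up the iterated-integral representation correctly.
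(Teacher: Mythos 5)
Your proof is correct and is precisely the argument the paper has in mind: the paper itself gives no proof of the Duality Theorem, stating only that it ``comes easily from a description of multiple zeta values as iterated integrals'' and deferring to \cite{Z} and \cite{H2}, which is exactly the Kontsevich representation plus the change of variables $t\mapsto 1-t$ that you carry out. Your combinatorial identification of the reversed-and-flipped word with $\Si^{-1}R_nC_n\Si(I)$, and the check that admissibility of $I$ (i.e.\ $w_1=0$) together with $w_n=1$ guarantees both convergence and admissibility of $\tau(I)$, correctly fills in the details the paper omits.
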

As shown in \cite{H1}, if $I$ and $J$ are admissible compositions then
their juxtaposition $IJ$ has the property that $\tau(IJ)=\tau(J)\tau(I)$.
Since the composition $I=(2)$ is self-dual, $\tau(I)$ ends in 1 if and only
if $I$ doesn't begin with 2.
\par
We note that multiple zeta values of depth greater than 1 cannot
in general be written as rational polynomials in the depth 1
(ordinary) zeta values:  for example, there is no such expression known 
for $\zt(2,6)$.  It is true, however, that all multiple zeta values
of weight 7 or less can be so expressed.  Euler \cite{E} gave the formula
\begin{equation}
\label{euln1}
\zt(n,1)=\frac{n}2\zt(n+1)+\frac12\sum_{i=1}^{n-2}\zt(n-i)\zt(i+1)
\end{equation}
valid for $n\ge 2$, and if $a+b$ is odd the double zeta value
$\zt(a,b)$ can be written as a rational polynomial in the $\zt(i)$.
Multiple zeta values of ``height one'', i.e., those of the form
$\zt(n,1,\dots,1)$, are also rational polynomials in the $\zt(i)$,
as can be seen from the generating function \cite{H2}
\[
\sum_{m,n\ge 1}s^mt^n\zt(m+1,\underbrace{1,\dots,1}_{n-1})=
1-\exp\left(\sum_{j\ge 2}\frac{\zt(j)}{j}(s^j+t^j-(s+t)^j)\right) .
\]
We also note that all known identities of multiple zeta values 
preserve weight.
\section{$H$-functions and summation formulas}
Now we show how to obtain families of summation formulas like those 
given in the Introduction.  For $u\in\QSy$ and nonnegative integers
$s_1,\dots,s_k$ with $s_1+\dots+s_k\ge 2$, define the $H$-function
$\eta_{s_1,\dots,s_k}:\QSy\to\R$ by
\begin{equation}
\label{eta}
\eta_{s_1,\dots,s_k}(u)=
\sum_{n=1}^\infty\frac{u(1,\frac12,\dots,\frac1n)}{n^{s_1}(n+1)^{s_2}\cdots
(n+k-1)^{s_k}}.
\end{equation}
Then we have the following result.
\begin{thm}
\label{etacon}
$\eta_{s_1,\dots,s_k}(u)$ converges for any $u\in\QSy$. 
\end{thm}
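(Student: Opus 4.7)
The plan is to use linearity to reduce to the monomial basis, then bound numerator and denominator separately.

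First, the map $u\mapsto u(1,\tfrac12,\dots,\tfrac1n)$ is $\Q$-linear and the series $\sum_n$ is linear termwise, so $\eta_{s_1,\dots,s_k}$ is $\Q$-linear in $u$. Since every element of $\QSy\subset\P$ has bounded degree, it is a finite $\Q$-linear combination of the monomial quasi-symmetric functions $M_I$. Hence it suffices to prove absolute convergence of the series when $u=M_I$ for an arbitrary composition $I=(a_1,\dots,a_m)$ with all $a_j\ge 1$.

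Second, estimate the numerator. From the definition,
\[
M_I\!\left(1,\tfrac12,\dots,\tfrac1n\right)=\sum_{1\le i_1<i_2<\cdots<i_m\le n}i_1^{-a_1}i_2^{-a_2}\cdots i_m^{-a_m}\le \prod_{j=1}^m H_n^{(a_j)}.
\]
For $a_j\ge 2$ we have $H_n^{(a_j)}\le \zt(a_j)$, and $H_n^{(1)}\le 1+\log n$. Letting $r$ be the number of parts of $I$ equal to $1$, there is therefore a constant $C_I>0$ with
\[
\bigl|M_I(1,\tfrac12,\dots,\tfrac1n)\bigr|\le C_I\bigl(1+\log n\bigr)^{r}
\]
for all $n\ge 1$.

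Third, bound the denominator from below. For each index $i\ge 1$, $(n+i-1)^{s_i}\ge n^{s_i}$ (trivially when $s_i=0$, and since $n+i-1\ge n$ when $s_i\ge 1$). Thus
\[
n^{s_1}(n+1)^{s_2}\cdots(n+k-1)^{s_k}\ge n^{s_1+s_2+\cdots+s_k}\ge n^{2},
\]
using the standing hypothesis $s_1+\cdots+s_k\ge 2$. Combining the two estimates, the general term of the series is bounded in absolute value by $C_I(1+\log n)^r/n^2$, and $\sum_n (\log n)^r/n^2$ converges for every $r\ge 0$. This gives absolute convergence of $\eta_{s_1,\dots,s_k}(M_I)$, and the linearity reduction completes the proof. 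The argument is essentially routine; there is no real obstacle, only the observation that the denominator always grows at least quadratically and the numerator at most polylogarithmically.
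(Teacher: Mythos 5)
Your proof is correct. Both you and the paper first reduce, by linearity, to the case $u=M_I$, and both ultimately exploit the same fact that the denominator is at least $n^{s_1+\cdots+s_k}\ge n^2$. Where you diverge is in handling the numerator: you bound $M_I(1,\tfrac12,\dots,\tfrac1n)$ directly by $\prod_j H_n^{(a_j)}\le C_I(1+\log n)^r$ and then dominate the whole series by $\sum_n (\log n)^r/n^2$. The paper instead interchanges the order of summation, writing
\[
\eta_{s_1,\dots,s_k}(M_I)=\sum_{1\le n_1<\cdots<n_j}\frac1{n_1^{i_1}\cdots n_j^{i_j}}\sum_{m=n_j}^\infty\frac1{m^{s_1}(m+1)^{s_2}\cdots(m+k-1)^{s_k}},
\]
bounds the inner tail termwise by the corresponding tail for $\eta_2$, and then identifies $\eta_2(M_I)$ as $\zt(i_j+2,i_{j-1},\dots,i_1)+\zt(2,i_j,\dots,i_1)$, a sum of two convergent multiple zeta values. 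Your route is more elementary and self-contained, needing nothing beyond the convergence of $\sum (\log n)^r/n^2$; the paper's route requires knowing that admissible multiple zeta values converge, but it earns its keep by producing the explicit evaluation of $\eta_2(M_I)$ (the paper's equation for $\eta_2$) as a byproduct, which is then used repeatedly in the rest of the paper. One small remark: since all terms of the series for $M_I$ are nonnegative, your appeal to absolute convergence is automatic, and the passage back from the $M_I$ to a general $u\in\QSy$ is justified exactly as you say.
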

\begin{proof}
It suffices to show that $\eta_{s_1,\dots,s_k}(M_I)$ converges for 
any composition $I$.  Writing $I=(i_1,\dots,i_j)$, we have
\begin{align*}
\eta_{s_1,\dots,s_k}(M_I)&=
\sum_{n=1}^\infty\frac1{n^{s_1}(n+1)^{s_2}\cdots (n+k-1)^{s_k}}
\sum_{1\le n_1<\dots<n_j\le n}\frac1{n_1^{i_1}\cdots n_j^{i_j}}\\
&=\sum_{1\le n_1<\cdots <n_j}\frac1{n_1^{i_1}\cdots n_j^{i_j}}
\sum_{m=n_j}^\infty\frac1{m^{s_1}(m+1)^{s_2}\cdots (m+k-1)^{s_k}} .
\end{align*}
Now the terms in the latter sum are evidently bounded above by
those for $\eta_2(M_I)$.  But 
\begin{multline}
\label{eta2}
\eta_2(M_I)=\sum_{1\le n_1<\cdots <n_j}\frac1{n_1^{i_1}\cdots n_j^{i_j}}
\sum_{m=n_j}^\infty\frac1{m^2}=\\
\zt(i_j+2,i_{j-1},\dots,i_1)+\zt(2,i_j,\dots,i_1),
\end{multline}
which converges.
(In the case $I=\emptyset$, equation (\ref{eta2}) should be
interpreted as $\eta_2(1)=\zt(2)$.  In general $\eta_{s_1,\dots,s_k}(1)$
is the ``$H$-series'' $H(s_1,\dots,s_k)$ discussed in \cite{HM}.)
\end{proof}
In this section we shall be concerned with the examples 
$\eta_2$, $\eta_{1,1}$, and $\eta_{0,1,1}$.  
We already have equation (\ref{eta2}) for $\eta_2$.
For the other two functions we have the following result.
\begin{thm}
\label{nome}
For any composition $I=(i_1,\dots,i_j)$,
\begin{align}
\label{eta11}
\eta_{1,1}(M_I)&=\zt(i_j+1,i_{j-1},\dots,i_1),\\
\label{eta011}
\eta_{0,1,1}(M_I)&=\begin{cases} 1,&\text{if $I=(1)$,}\\
\eta_{0,1,1}(M_{(i_1,\dots,i_{j-1})}),&\text{if $i_j=1$ and $j\ge 2$,}\\
\zt(i_j,\dots,i_1)-\eta_{0,1,1}(M_{(i_1,\dots,i_{j-1},i_j-1)}),&\text{otherwise.}
\end{cases}
\end{align}
\end{thm}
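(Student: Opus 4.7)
The proof strategy is to convert the defining series for each $H$-function into the iterated-sum form used for multiple zeta values by interchanging the order of summation, then collapse the resulting tail sums by partial fractions and telescoping. As in the proof of Theorem~\ref{etacon}, for any composition $I=(i_1,\dots,i_j)$ one has
\[
\eta_{s_1,\dots,s_k}(M_I)=\sum_{1\le n_1<\dots<n_j}\frac1{n_1^{i_1}\cdots n_j^{i_j}}\sum_{m=n_j}^\infty\frac1{m^{s_1}(m+1)^{s_2}\cdots(m+k-1)^{s_k}},
\]
so the task reduces to evaluating the inner tail as an explicit function of $n_j$.

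For equation (\ref{eta11}), the inner tail is $\sum_{m\ge n_j}\frac1{m(m+1)}$, which by the partial fraction $\frac1{m(m+1)}=\frac1m-\frac1{m+1}$ telescopes to $\frac1{n_j}$. Feeding this back into the outer sum turns $n_j^{i_j}$ into $n_j^{i_j+1}$, giving the multiple zeta value $\zt(i_j+1,i_{j-1},\dots,i_1)$ (the reversal is exactly the convention in (\ref{limit})). This is essentially a one-line computation once the order of summation is swapped.

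For equation (\ref{eta011}) the same swap produces the inner tail $\sum_{m\ge n_j}\frac1{(m+1)(m+2)}=\frac1{n_j+1}$, so
\[
\eta_{0,1,1}(M_I)=\sum_{1\le n_1<\dots<n_j}\frac1{n_1^{i_1}\cdots n_{j-1}^{i_{j-1}}\,n_j^{i_j}(n_j+1)}.
\]
Now the three cases come out of treating the factor $n_j^{i_j}(n_j+1)$. When $I=(1)$ the whole sum is $\sum_{m\ge 1}\frac1{m(m+1)}=1$. When $i_j=1$ and $j\ge 2$, the innermost factor $\frac1{n_j(n_j+1)}$ telescopes over $n_j>n_{j-1}$ to $\frac1{n_{j-1}+1}$; comparing with the same calculation applied to $M_{(i_1,\dots,i_{j-1})}$ (where the tail $\sum_{m\ge n_{j-1}}\frac1{(m+1)(m+2)}$ also gives $\frac1{n_{j-1}+1}$) shows that the two are equal. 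When $i_j\ge 2$, use the partial fraction
\[
\frac1{n_j^{i_j}(n_j+1)}=\frac1{n_j^{i_j}}-\frac1{n_j^{i_j-1}(n_j+1)};
\]
the first term contributes the (now admissible) MZV $\zt(i_j,i_{j-1},\dots,i_1)$ while the second is exactly $\eta_{0,1,1}(M_{(i_1,\dots,i_{j-1},i_j-1)})$.

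There is no genuine obstacle here; the argument is pure bookkeeping. The only point that needs a little care is the interchange of summations, which is justified (as in Theorem~\ref{etacon}) by the absolute convergence bound against $\eta_2(M_I)$, and the verification that the recursion in the $i_j=1$, $j\ge 2$ case is consistent — both sides reduce to the same iterated sum with weight $\frac1{n_{j-1}+1}$ attached at the innermost index.
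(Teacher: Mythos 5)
Your proposal is correct and follows essentially the same route as the paper: swap the order of summation, evaluate the telescoping tail $\sum_{m\ge n}\frac1{m(m+1)}=\frac1n$ (respectively $\frac1{n+1}$ for the shifted version), and in the $i_j>1$ case apply the partial fraction $\frac1{n^{i}(n+1)}=\frac1{n^{i}}-\frac1{n^{i-1}(n+1)}$, which is exactly the paper's $\frac1{n^{i-1}}\left(\frac1n-\frac1{n+1}\right)$. The case analysis and the recursive identifications match the paper's proof step for step.
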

\begin{proof}
Since
\[
\sum_{m=n}^\infty\frac1{m(m+1)}=\frac1{n}
\]
it follows that
\begin{multline*}
\eta_{1,1}(M_I)=\sum_{1\le n_1<\cdots <n_j}\frac1{n_1^{i_1}\cdots n_j^{i_j}}
\sum_{m=n_j}^\infty\frac1{m(m+1)}\\
=\sum_{1\le n_1<\cdots <n_j}\frac1{n_1^{i_1}\cdots n_j^{i_j+1}}
=\zt(i_j+1,i_{j-1},\dots,i_1) .
\end{multline*}
We have also
\begin{multline*}
\eta_{0,1,1}(M_I)=\sum_{1\le n_1<\dots<n_j}\frac1{n_1^{i_1}\cdots n_j^{i_j}}
\sum_{m=n_j+1}^\infty\frac1{m(m+1)}\\
=\sum_{1\le n_1<\dots<n_j}\frac1{n_1^{i_1}\cdots n_j^{i_j}(n_j+1)} .
\end{multline*}
If $I=(1)$, this is 
\[
\eta_{0,1,1}(p_1)=\sum_{n=1}^\infty \frac1{n(n+1)}=1.
\]
Now suppose $I\ne (1)$.  If $i_j=1$, we  have
\begin{multline*}
\eta_{0,1,1}(M_I)=\sum_{1\le n_1<\dots<n_j}
\frac1{n_1^{i_j}\cdots n_{j-1}^{i_{j-1}}n_j(n_j+1)}=\\
\sum_{1\le n_1<\dots<n_{j-1}}\frac1{n_1^{i_1}\cdots n_{j-1}^{i_{j-1}}}\sum_{m=n_{j-1}+1}^\infty
\frac1{m(m+1)}\\
=\sum_{1\le n_1<\dots<n_{j-1}}\frac1{n_1^{i_1}\cdots n_{j-1}^{i_{j-1}}(n_{j-1}+1)}
=\eta_{0,1,1}(M_{(i_1,\dots,i_{j-1})}) .
\end{multline*}
On the other hand, if $i_j>1$ we have
\begin{multline*}
\eta_{0,1,1}(M_I)=
\sum_{1\le n_1<n_2<\dots<n_j}\frac1{n_1^{i_1}n_2^{i_2}\cdots n_{j-1}^{i_{j-1}}n_j^{i_j-1}}
\left(\frac1{n_j}-\frac1{n_j+1}\right)=\\
\zt(i_j,i_{j-1},\dots,i_1)-\eta_{0,1,1}(M_{(i_1,\dots,i_{j-1},i_j-1)}) ,
\end{multline*}
and the conclusion follows.
\end{proof}
Now we consider the images under $\eta_2$, $\eta_{1,1}$ and $\eta_{0,1,1}$
of the symmetric functions $N_{n,k}$ introduced in \S2 above.
For integers $n\ge 2$ and $1\le k< n$, let $S_{n,k}$ be the sum of
all multiple zeta values of weight $n$ and depth $k$:  by the
sum theorem for multiple zeta values, $S_{n,k}=\zt(n)$.
We can also write $S_{n,k}=S_{n,k}^{[2]}+S_{n,k}^T$, where
\[
S_{n,k}^{[2]}=\sum_{2+i_2+\dots+i_k=n}\zt(2,i_2,\dots,i_k) .
\]
Note that $S_{n,n-1}^{[2]}=S_{n,n-1}=\zt(n)$, and that $S_{n,1}^T=S_{n,1}=\zt(n)$
for $n>2$.
The following fact is an immediate consequence of equation (\ref{eta2}).
\begin{prop}
$\eta_2(N_{n,k})=S_{n+2,k}^T+S_{n+2,k+1}^{[2]}$.
\end{prop}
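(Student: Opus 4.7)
The plan is to apply $\eta_2$ directly to the expansion of $N_{n,k}$ as a sum of monomial quasi-symmetric functions, then identify the two resulting collections of multiple zeta values with the two terms on the right-hand side. Specifically, by the definition of $N_{n,k}$ recalled in Section 2,
\[
N_{n,k}=\sum_{I}M_I,
\]
the sum taken over all compositions $I=(i_1,\dots,i_k)$ of $n$ with exactly $k$ parts. Since $\eta_2$ is $\R$-linear, I can apply equation (\ref{eta2}) termwise:
\[
\eta_2(N_{n,k})=\sum_{I}\bigl[\zt(i_k+2,i_{k-1},\dots,i_1)+\zt(2,i_k,i_{k-1},\dots,i_1)\bigr].
\]

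Next I would analyze the two resulting sums separately. For the first, as $I=(i_1,\dots,i_k)$ ranges over all compositions of $n$ with $k$ parts, the reversed-and-augmented tuple $(i_k+2,i_{k-1},\dots,i_1)$ ranges bijectively over all compositions of $n+2$ with $k$ parts whose leading entry is at least $3$. Summing over all these MZVs gives exactly the ``tail'' part $S_{n+2,k}^T$, i.e., the admissible weight-$(n+2)$ depth-$k$ multiple zeta values that do not begin with $2$. For the second, the tuple $(2,i_k,\dots,i_1)$ similarly ranges bijectively over all compositions of $n+2$ with $k+1$ parts that begin with $2$, producing exactly $S_{n+2,k+1}^{[2]}$ as defined just before the proposition.

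There is essentially no obstacle here: the argument is a direct unpacking of definitions, and the only point requiring a moment's attention is verifying that reversing a composition of $n$ with $k$ parts is a bijection on that set, which is trivial. Adding the two pieces gives the claimed identity $\eta_2(N_{n,k})=S_{n+2,k}^T+S_{n+2,k+1}^{[2]}$, and the proof is complete.
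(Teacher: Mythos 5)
Your proof is correct and matches the paper's intent exactly: the paper states this proposition as ``an immediate consequence of equation (\ref{eta2})'', and your argument simply spells out that immediate consequence by applying $\eta_2$ termwise to $N_{n,k}=\sum_I M_I$ and sorting the resulting multiple zeta values into those beginning with an entry $\ge 3$ (giving $S_{n+2,k}^T$) and those beginning with $2$ (giving $S_{n+2,k+1}^{[2]}$). Nothing is missing.
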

Similarly, Theorem \ref{nome} gives the following.
\begin{prop}
\label{eta11n}
$\eta_{1,1}(N_{n,k})=S_{n+1,k}=\zt(n+1)$.
\end{prop}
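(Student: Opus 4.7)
The plan is to combine the definition of $N_{n,k}$ as a sum of monomial quasi-symmetric functions with the explicit formula (\ref{eta11}) for $\eta_{1,1}$ on such monomials, and then invoke the sum theorem for multiple zeta values.

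First, I would expand
\[
\eta_{1,1}(N_{n,k}) = \sum_{I} \eta_{1,1}(M_I),
\]
where the sum runs over all compositions $I = (i_1, \dots, i_k)$ of $n$ with exactly $k$ parts. By equation (\ref{eta11}), each summand equals $\zt(i_k+1, i_{k-1}, \dots, i_1)$.

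Next, I would verify that the map $(i_1, \dots, i_k) \mapsto (i_k+1, i_{k-1}, \dots, i_1)$ is a bijection between compositions of $n$ with $k$ parts and admissible compositions of $n+1$ with $k$ parts. Indeed, the image has first entry $i_k+1 \geq 2$, all remaining entries $\geq 1$, and total weight $n+1$; and conversely, any admissible composition $(j_1, \dots, j_k)$ of $n+1$ pulls back uniquely via $i_k = j_1 - 1$, $i_{k-1} = j_2$, \dots, $i_1 = j_k$. Therefore
\[
\eta_{1,1}(N_{n,k}) = \sum_{\substack{J \text{ admissible} \\ |J| = n+1,\ \operatorname{depth}(J) = k}} \zt(J) = S_{n+1,k}.
\]
Finally, the sum theorem quoted in \S3 gives $S_{n+1,k} = \zt(n+1)$, completing the proof.

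There is no serious obstacle here: the argument is a direct unwinding of definitions followed by a reindexing. The only point requiring minor care is ensuring the reversal-and-shift bijection lands precisely on admissible compositions of the correct weight and depth, but this is immediate from the constraint $i_k \geq 1$ in the original composition.
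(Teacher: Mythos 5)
Your proof is correct and follows exactly the route the paper intends: the paper simply states that the proposition follows from Theorem \ref{nome} (equation (\ref{eta11})) applied to the expansion of $N_{n,k}$ into monomial quasi-symmetric functions, together with the sum theorem. Your write-up merely makes explicit the reversal-and-shift bijection onto admissible compositions of weight $n+1$ and depth $k$ (with $k\le n$, so the sum theorem applies), which the paper leaves implicit.
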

\begin{prop}  $\eta_{0,1,1}(N_{n,n})=1$, and if $n\ge 2$,
\[
\eta_{0,1,1}(N_{n,k})=\begin{cases} \zt(n)-\eta_{0,1,1}(N_{n-1,1}),& k=1,\\
\zt(n)+\eta_{0,1,1}(N_{n-1,k-1}-N_{n-1,k}),& 1<k<n.
\end{cases}
\]
\end{prop}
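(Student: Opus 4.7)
The plan is to unfold $N_{n,k}$ as a sum of monomial quasi-symmetric functions $M_I$ over compositions $I$ of $n$ with $k$ parts, apply the recursion of Theorem \ref{nome} termwise, and recognize the resulting sums using the sum theorem together with natural bijections among compositions.

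First I would dispose of the case $k=n$. The only composition of $n$ with $n$ parts is $(1,1,\dots,1)$, and the second clause of Theorem \ref{nome} reduces $\eta_{0,1,1}(M_{(1,1,\dots,1)})$ step by step to $\eta_{0,1,1}(M_{(1)})=1$, proving $\eta_{0,1,1}(N_{n,n})=1$.

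Next, for $k=1$ with $n\ge 2$, one has $N_{n,1}=M_{(n)}$, and the third clause of Theorem \ref{nome} gives directly
\[
\eta_{0,1,1}(M_{(n)})=\zt(n)-\eta_{0,1,1}(M_{(n-1)})=\zt(n)-\eta_{0,1,1}(N_{n-1,1}).
\]

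The main case is $1<k<n$. I would partition the compositions of $n$ with $k$ parts according to whether the last part $i_k$ equals $1$ or exceeds $1$. For those ending in $1$, the map $(i_1,\dots,i_{k-1},1)\mapsto (i_1,\dots,i_{k-1})$ is a bijection onto compositions of $n-1$ with $k-1$ parts, and the second clause of Theorem \ref{nome} gives $\eta_{0,1,1}(M_I)=\eta_{0,1,1}(M_{(i_1,\dots,i_{k-1})})$; summing yields $\eta_{0,1,1}(N_{n-1,k-1})$. For those with $i_k>1$, the third clause gives
\[
\eta_{0,1,1}(M_I)=\zt(i_k,i_{k-1},\dots,i_1)-\eta_{0,1,1}(M_{(i_1,\dots,i_{k-1},i_k-1)}).
\]
The crucial observation is that, as $I$ ranges over compositions of $n$ with $k$ parts and $i_k>1$, the reversed composition $(i_k,\dots,i_1)$ ranges over all admissible compositions of weight $n$ and depth $k$; since $1\le k\le n-1$, the sum theorem (applied to depth $k$) collapses these $\zt$-values to $\zt(n)$. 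Meanwhile the map $(i_1,\dots,i_{k-1},i_k)\mapsto (i_1,\dots,i_{k-1},i_k-1)$ is a bijection onto compositions of $n-1$ with $k$ parts, so the remaining piece sums to $-\eta_{0,1,1}(N_{n-1,k})$. Adding the two contributions yields $\zt(n)+\eta_{0,1,1}(N_{n-1,k-1}-N_{n-1,k})$, as claimed.

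I do not anticipate a serious obstacle: the entire argument is a bookkeeping exercise once Theorem \ref{nome} is in hand. The only substantive input is the sum theorem, which is what allows the admissible-composition sum to collapse to a single $\zt(n)$; the mild point to watch is verifying that the two bijections on compositions (removing a trailing $1$, and decrementing the last part) are indeed bijections onto the claimed sets, so that the sums reassemble cleanly as $\eta_{0,1,1}(N_{n-1,k-1})$ and $\eta_{0,1,1}(N_{n-1,k})$ respectively.
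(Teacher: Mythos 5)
Your proposal is correct and follows the same route the paper intends: the paper's proof simply says the recursion is ``immediate from'' the termwise formula of Theorem \ref{nome} (with the sum theorem already built into the identification $S_{n,k}=\zeta(n)$ stated just above the proposition), and your write-up supplies exactly the omitted bookkeeping --- the split on the last part, the two bijections on compositions, and the collapse of the admissible-composition sum via the sum theorem. No gaps.
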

\begin{proof} Note that $N_{n,n}=e_n$, and by applying
equation (\ref{eta011}) repeatedly, we have
\begin{equation}
\label{ome}
\eta_{0,1,1}(e_n)=\eta_{0,1,1}(e_{n-1})=\dots=\eta_{0,1,1}(e_1)=1.
\end{equation}
The second statement is immediate from (\ref{eta011}).
\end{proof}
Note that equation (\ref{ch1}) follows from the case $k=n$ of
the preceding result.  In the case $k=1$, the result is
\begin{multline}
\label{omp}
\eta_{0,1,1}(p_n)=\zt(n)-\eta_{0,1,1}(N_{n-1,1})=\dots=\\
\zt(n)-\zt(n-1)+\dots+(-1)^n\zt(2)+(-1)^{n+1}.
\end{multline}
Equation (\ref{omp}) implies the following result,
which appears in the remark following \cite[Theorem 2.1]{So}.
\begin{cor}
For $k\ge 2$,
\[
\sum_{n=1}^\infty\frac{H_n^{(k)}}{(n+1)(n+2)}=
\sum_{i=0}^{k-2}(-1)^i\zt(k-i)+(-1)^{k-1} .
\]
\end{cor}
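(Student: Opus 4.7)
The plan is to reduce the corollary immediately to equation (\ref{omp}), which has already been established in the proof of the preceding proposition. First I would observe that $H_n^{(k)} = \sum_{j=1}^n (1/j)^k$ is nothing other than the power-sum symmetric function $p_k$ evaluated at $(1, 1/2, \ldots, 1/n)$. Consequently, by the defining formula (\ref{eta}) for $H$-functions with $(s_1, s_2, s_3) = (0, 1, 1)$, we have
\[
\sum_{n=1}^\infty \frac{H_n^{(k)}}{(n+1)(n+2)} = \eta_{0,1,1}(p_k).
\]

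Next I would apply equation (\ref{omp}) with $n$ replaced by $k$. That equation states exactly
\[
\eta_{0,1,1}(p_k) = \zt(k) - \zt(k-1) + \cdots + (-1)^k \zt(2) + (-1)^{k+1},
\]
and re-indexing the sum by $i$ so that the generic term $(-1)^i \zt(k-i)$ runs over $i = 0, 1, \ldots, k-2$ (contributing $\zt(k), -\zt(k-1), \ldots, (-1)^{k-2}\zt(2) = (-1)^k \zt(2)$) together with the constant term $(-1)^{k+1} = (-1)^{k-1}$ yields precisely the asserted closed form.

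There is no real obstacle here: once one notices that the corollary is the case $k=1$ of the proposition (since $p_k = N_{k,1}$), the identity (\ref{omp}) derived from the recursion in that proposition supplies the answer directly. The only thing to verify is the sign bookkeeping in the re-indexing from $\zt(k) - \zt(k-1) + \cdots + (-1)^k\zt(2) + (-1)^{k+1}$ to $\sum_{i=0}^{k-2}(-1)^i \zt(k-i) + (-1)^{k-1}$, which is routine.
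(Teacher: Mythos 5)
Your proposal is correct and follows exactly the route the paper intends: the sum is $\eta_{0,1,1}(p_k)$ by the definition (\ref{eta}) with $(s_1,s_2,s_3)=(0,1,1)$, and since $p_k=N_{k,1}$ the identity is just equation (\ref{omp}) rewritten, with the sign bookkeeping $(-1)^{k-2}\zt(2)=(-1)^k\zt(2)$ and $(-1)^{k+1}=(-1)^{k-1}$ checking out. The paper gives no further argument beyond citing (\ref{omp}), so your write-up simply makes the same deduction explicit.
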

\par
Now we in a position to prove Theorems \ref{ch2} through \ref{eta2pq} of the
Introduction by finding the images under the 
three $H$-functions $\eta_2$, $\eta_{1,1}$, and $\eta_{0,1,1}$
of the symmetric function $e_jh_{n-j}$.
First we consider $\eta_2(e_jh_{n-j})$.
\begin{thm}  Let $0\le j\le n$.  Then
\label{eta2eh}
\[
\eta_2(e_jh_{n-j})=\begin{cases} (n+1)\zt(n+2), & j=0,\\
\sum_{p=j}^n\binom{p-1}{j-1}S_{n+2,p}^T+\binom{n+1}{j+1}\zt(n+2),& j\ge 1.
\end{cases}
\]
\end{thm}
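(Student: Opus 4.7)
The plan is to apply the preceding proposition that computes $\eta_2$ on the basis elements $N_{n,p}$, combined with Lemma \ref{ehN} which expresses $e_j h_{n-j}$ in that basis, and then to simplify the resulting combination of $S^T$ and $S^{[2]}$ sums using the sum theorem together with standard binomial identities.

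First I would substitute $e_j h_{n-j} = \sum_{p=j}^n \binom{p}{j} N_{n,p}$ from Lemma \ref{ehN} and apply $\eta_2$ termwise, using $\eta_2(N_{n,p}) = S_{n+2,p}^T + S_{n+2,p+1}^{[2]}$. Next I would invoke the sum theorem in the form $S_{n+2,q}^{[2]} = \zt(n+2) - S_{n+2,q}^T$. The coefficient of $\zt(n+2)$ assembled from the $S^{[2]}$ pieces then collapses to $\sum_{p=j}^n \binom{p}{j} = \binom{n+1}{j+1}$ by the hockey stick identity, producing exactly the $\zt(n+2)$ term in the claim.

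For $j \ge 1$ the remaining $S^T$ contribution is $\sum_{p=j}^n \binom{p}{j} S_{n+2,p}^T - \sum_{p=j}^n \binom{p}{j} S_{n+2,p+1}^T$. Reindexing the second sum by $q = p+1$ and applying Pascal's rule $\binom{p}{j} - \binom{p-1}{j} = \binom{p-1}{j-1}$ telescopes this difference into $\sum_{p=j}^n \binom{p-1}{j-1} S_{n+2,p}^T$, matching the stated formula, provided that the boundary term $\binom{n}{j} S_{n+2,n+1}^T$ generated by the shift vanishes. This is the one nontrivial observation: the only multiple zeta value of weight $n+2$ and depth $n+1$ is $\zt(2,1,\ldots,1)$, which begins with $2$ and so contributes to $S^{[2]}$ rather than $S^T$; hence $S_{n+2,n+1}^T = 0$.

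The case $j = 0$ I would handle separately, since the uniform formula would involve $\binom{-1}{-1}$. Writing $h_n = \sum_{p=1}^n N_{n,p}$ and performing the analogous manipulation, the generic terms collapse via the sum theorem to $(n-1)\zt(n+2)$, the left boundary contributes $S_{n+2,1}^T = \zt(n+2)$, and the right boundary contributes $S_{n+2,n+1}^{[2]} = \zt(2,1,\ldots,1) = \zt(n+2)$, where the last equality is the duality theorem (equivalently, Euler's formula for height-one multiple zeta values). Summing yields $(n+1)\zt(n+2)$. The only real obstacle is the boundary identification $S_{n+2,n+1}^T = 0$ together with the duality evaluation of $S_{n+2,n+1}^{[2]}$; everything else is straightforward binomial bookkeeping.
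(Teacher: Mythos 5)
Your proposal is correct and follows essentially the same route as the paper: expand $e_jh_{n-j}$ in the $N_{n,p}$ via Lemma \ref{ehN}, apply the formula $\eta_2(N_{n,p})=S_{n+2,p}^T+S_{n+2,p+1}^{[2]}$, and simplify with the sum theorem, Pascal's rule, and the hockey-stick identity. Your explicit observation that $S_{n+2,n+1}^T=0$ is exactly what the paper packages into the remark $S_{n,n-1}^{[2]}=S_{n,n-1}=\zt(n)$, so the two arguments coincide.
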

\begin{proof} If $j=0$ we have
\begin{multline*}
\eta_2(h_n)=\eta_2\left(\sum_{k=1}^n N_{n,k}\right)=
\sum_{k=1}^n(S_{n+2,k}^T+S_{n+2,k+1}^{[2]})\\
=S_{n+2,1}+\sum_{k=2}^n(S_{n+2,k}^{[2]}+S_{n+2,k}^T)+S_{n+2,n+1}=
(n+1)\zt(n+2).
\end{multline*}
Now suppose $j\ge 1$.  Then using Lemma \ref{ehN},
\begin{align*}
\eta_2(e_jh_{n-j})&=\sum_{k=j}^n\binom{k}{j}\eta_2(N_{n,k})\\
&=\sum_{k=j}^n\binom{k}{j}(S_{n+2,k}^T+S_{n+2,k+1}^{[2]})\\
&=S_{n+2,j}^T+\sum_{k=j+1}^n\left[\binom{k-1}{j}S_{n+2,k}^{[2]}+
\binom{k}{j}S_{n+2,k}^T\right]+\binom{n}{j}S_{n+2,n+1}\\
&=S_{n+2,j}^T+\sum_{k=j+1}^n\left[\binom{k-1}{j}\zt(n+2)+\binom{k-1}{j-1}S_{n+2,k}^T
\right]+\binom{n}{j}\zt(n+2)\\
&=\sum_{k=j}^n\binom{k-1}{j-1}S_{n+2,k}^T+\binom{n+1}{j+1}\zt(n+2) .
\end{align*}
\end{proof}
\begin{proof3}
Equation (\ref{qn2}) follows from the case $j=0$ of the preceding result.
By duality of multiple zeta values, $S_{n,k}^T=S_{n,n-k}^R$, where
$S_{n,k}^R$ is the sum of all weight-$n$, depth $k$ multiple zeta values
whose exponent string ends in 1.  From the preceding result we have
\[
\eta_2(e_n)=S_{n+2,n}^T+\zt(n+2)=S_{n+2,2}^R+\zt(n+2),
\]
from which follows
\[
\eta_2(e_n)=\zt(n+1,1)+\zt(n+2)
=\frac{n+3}{2}\zt(n+2)+\frac12\sum_{j=2}^n\zt(j)\zt(n+2-j) .
\]
using Euler's formula (\ref{euln1}), and thus equation (\ref{pn2}).
\qed
\end{proof3}
We also have the following result.
\begin{cor} For $n\ge 2$,
\[
\eta_2(e_{n-1}h_1)=\zt(n,2)+n\zt(n+1,1)+(n+1)\zt(n+2) .
\]
\end{cor}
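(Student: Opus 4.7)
The plan is to apply Theorem \ref{eta2eh} directly with $j=n-1$, which yields
\[
\eta_2(e_{n-1}h_1)=S_{n+2,n-1}^T+(n-1)S_{n+2,n}^T+(n+1)\zt(n+2),
\]
reducing the problem to evaluating $S_{n+2,n-1}^T$ and $S_{n+2,n}^T$. By duality, exactly as in the proof of Theorem \ref{eta2pq}, we have $S_{n+2,n}^T=S_{n+2,2}^R$ and $S_{n+2,n-1}^T=S_{n+2,3}^R$. The first of these is trivially $\zt(n+1,1)$, since the only admissible depth-2 weight-$(n+2)$ MZV ending in 1 is $\zt(n+1,1)$.

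The main work is thus to identify $S_{n+2,3}^R$ in a closed form. By definition this sum runs over admissible triples $(a,b,1)$ of weight $n+2$, giving
\[
S_{n+2,3}^R=\sum_{a=2}^{n}\zt(a,n+1-a,1).
\]
I would then apply the Derivation Theorem to the admissible composition $(n,1)$. The left-hand side is $\zt(n+1,1)+\zt(n,2)$, and since the second entry contributes nothing (as $1-1=0$), the right-hand side reduces to the single sum
\[
\sum_{p=1}^{n-1}\zt(n-p+1,p,1).
\]
Substituting $a=n+1-p$ shows this is exactly $S_{n+2,3}^R$, so $S_{n+2,3}^R=\zt(n,2)+\zt(n+1,1)$.

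Plugging these evaluations back in gives
\[
\eta_2(e_{n-1}h_1)=\bigl[\zt(n,2)+\zt(n+1,1)\bigr]+(n-1)\zt(n+1,1)+(n+1)\zt(n+2),
\]
which collapses to the claimed formula. The main (and really only nontrivial) obstacle is recognizing that the one-parameter family of triple zeta values $S_{n+2,3}^R$ collapses via the Derivation Theorem; once that identification is in place, the rest is bookkeeping. No delicate symmetric-function manipulations beyond what Theorem \ref{eta2eh} already provides are needed.
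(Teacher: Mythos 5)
Your proposal is correct and follows essentially the same route as the paper: apply Theorem \ref{eta2eh} with $j=n-1$, convert $S_{n+2,n-1}^T$ and $S_{n+2,n}^T$ to $S_{n+2,3}^R$ and $S_{n+2,2}^R$ by duality, and collapse $S_{n+2,3}^R=\sum_{a=2}^{n}\zt(a,n+1-a,1)$ to $\zt(n,2)+\zt(n+1,1)$ via the Derivation Theorem applied to $(n,1)$. The paper's proof is identical in substance, merely citing the Derivation Theorem without writing out the application that you make explicit.
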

\begin{proof}
We have
\begin{align*} 
\eta_2(e_{n-1}h_1)&=S_{n+2,n-1}^T+(n-1)S_{n+2,n}^T +(n+1)\zt(n+2)\\
&=S_{n+2,3}^R+(n-1)S_{n+2,2}^R+(n+1)\zt(n+2)\\
&=\sum_{j=2}^n\zt(j,n+1-j,1)+(n-1)\zt(n+1,1)+(n+1)\zt(n+2)\\
&=\zt(n,2)+\zt(n+1,1)+(n-1)\zt(n+1,1)+(n+1)\zt(n+2),
\end{align*}
where we have used the derivation theorem for multiple zeta values in 
the last step.
\end{proof}
In general the formula for $\eta_2(e_jh_{n-j})$ given by Theorem \ref{eta2eh} 
cannot be reduced to ordinary zeta values if $1\le j<n$, unless
$n\le 5$.  For example,
\begin{align*}
\eta_2(e_2h_2)&=10\zt(6)+S_{6,2}^T+2S_{6,3}^T+3S_{6,4}^T\\
&=11\zt(6)+3S_{6,2}^R-S_{6,2}^{[2]}+2S_{6,3}^R\\
&=11\zt(6)+3\zt(5,1)-\zt(2,4)+2(\zt(4,1,1)+\zt(3,2,1)+\zt(2,3,1))\\
&=11\zt(6)+3\zt(5,1)-\zt(2,4)+2(\zt(5,1)+\zt(4,2))\\
&=11\zt(6)+5\zt(5,1)+2\zt(4,2)-\zt(2,4)\\
&=10\zt(6)+\frac12\zt(3)^2 ,
\end{align*}
but
\[
\zt(e_5h_1)=\zt(6,2)+6\zt(7,1)+7\zt(8)=\zt(6,2)+6\zt(3)\zt(5)+\frac{83}2\zt(8)
\]
has no known expression as a rational polynomial in the $\zt(i)$ since
$\zt(6,2)$ doesn't.
\par
\begin{proof2}
By Lemma \ref{ehN} and Proposition \ref{eta11n}
\[
\eta_{1,1}(e_jh_{n-j})=\sum_{k=j}^n\binom{k}{j}\eta_{1,1}(N_{n,k})=
\sum_{k=j}^n\binom{k}{j}\zt(n+1),
\]
and the conclusion follows.
\end{proof2}
\begin{proof1}
It is enough to show that
\[
\eta_{0,1,1}(e_jh_{n-j})=
\begin{cases}
\sum_{k=0}^j\binom{n-k}{j+1-k}\zt(n-k)-\zt(n-j),& j\le n-2,\\
\sum_{k=0}^{n-2}\zt(n-k)+1,&j=n-1,\\
1,&j=n,\end{cases}
\]
for $n\ge 2$.
Recall from equation (\ref{ome}) that $\eta_{0,1,1}(e_n)=1$, so the
result is true for $j=n$.
If $j=0$, we have
\begin{multline*}
\eta_{0,1,1}(h_n)=\sum_{k=1}^n \eta_{0,1,1}(N_{n,k})
=S_{n,1}-\eta_{0,1,1}(N_{n-1,1})+\eta_{0,1,1}(N_{n-1,1})+S_{n,2}\\
-\eta_{0,1,1}(N_{n-1,1})
+\dots+\eta_{0,1,1}(N_{n-1,n-2})+S_{n,n-1}-\eta_{0,1,1}(N_{n-1,n-1})+1\\
=S_{n,1}+S_{n,2}+\dots+S_{n,n-1}=(n-1)\zt(n)
\end{multline*}
and again the result holds.  Now let $0<j<n$.  Then
\begin{align*}
\eta_{0,1,1}(e_jh_{n-j})&=\sum_{k=j}^n\binom{k}{j}\eta_{0,1,1}(N_{n,k})\\
&=\sum_{k=j}^{n-1}\binom{k}{j}[\zt(n)+\eta_{0,1,1}(N_{n-1,k-1}-N_{n-1,k})]+\binom{k}{j}\\
&=\sum_{k=j}^n\binom{k}{j}\zt(n)+\eta_{0,1,1}(N_{n-1,j-1})+\\
\sum_{k=j}^{n-2}&\left[\binom{k+1}{j}-\binom{k}{j}\right]\eta_{0,1,1}(N_{n-1,k})-
\binom{n-1}{j}+\binom{n}{j}\\
&=\binom{n}{j+1}\zt(n)+\sum_{k=j-1}^{n-1}\binom{k}{j-1}\eta_{0,1,1}(N_{n-1,k})\\
&=\binom{n}{j+1}\zt(n)+\eta_{0,1,1}(e_{j-1}h_{n-1}),
\end{align*}
and the result follows by induction on $j$.
\qed
\end{proof1}
\section{Further summation formulas}
We return to the general $H$-functions $\eta_{s_1,\dots,s_k}$ defined by equation
(\ref{eta}).  If $s_k>0$, we call $k$ the length of $\eta_{s_1,\dots,s_k}$.
We have the following result (cf. \cite[Lemma 1]{HM}).
\begin{prop}
\label{diff}
Let $s_1,\dots,s_k$ be a nonnegative integer sequence with $s_i,s_j\ge 1$
for $1\le i<j\le k$.  
If $s_1+\dots+s_k\ge 3$, then
\[
\eta_{s_1,\dots,s_i,\dots,s_j,\dots,s_k}=
\frac1{j-i}(\eta_{s_1,\dots,s_i,\dots,s_j-1,\dots,s_k}-\eta_{s_1,\dots,s_i-1,\dots,s_j,\dots,s_k}).
\]
\end{prop}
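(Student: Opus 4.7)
The plan is to reduce the identity to an elementary partial-fraction decomposition applied to each summand of the defining series. The key observation is the identity
\[
\frac{1}{(n+i-1)(n+j-1)} = \frac{1}{j-i}\left(\frac{1}{n+i-1} - \frac{1}{n+j-1}\right),
\]
valid since $i \ne j$. Because $s_i \ge 1$ and $s_j \ge 1$, I can factor one copy of $(n+i-1)$ and one copy of $(n+j-1)$ out of the denominator of the generic summand of $\eta_{s_1,\dots,s_k}(u)$ and apply this identity; the result is that the summand equals
\[
\frac{1}{j-i}\left[\frac{u(1,1/2,\dots,1/n)}{n^{s_1}\cdots(n+j-1)^{s_j-1}\cdots(n+k-1)^{s_k}} - \frac{u(1,1/2,\dots,1/n)}{n^{s_1}\cdots(n+i-1)^{s_i-1}\cdots(n+k-1)^{s_k}}\right],
\]
which is exactly the difference of the two summands that appear on the right-hand side of the claimed identity.

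Next, I would fix $u \in \QSy$ and sum the resulting equality over $n \ge 1$. To split the sum termwise into the two stated $H$-functions, I need each of those $H$-functions to converge. Since the total exponent sum of each of the two series on the right is $s_1+\cdots+s_k-1 \ge 2$, and since the hypothesis $s_i \ge 1$ (resp.\ $s_j \ge 1$) forces at least one strictly positive entry to survive the reduction, Theorem \ref{etacon} guarantees absolute convergence of both sums. Hence the termwise splitting is legitimate and the proposition follows.

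There is no real obstacle here: the proof is essentially the same partial-fraction manipulation used in \cite[Lemma 1]{HM} (to which the proposition refers), and the only point that requires attention is the weight bookkeeping ensuring that the two resulting $H$-functions remain within the convergence range of Theorem \ref{etacon}. The hypothesis $s_1+\cdots+s_k \ge 3$ is precisely what is needed for this.
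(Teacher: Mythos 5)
Your proof is correct and follows exactly the same route as the paper: the partial-fraction identity $\frac1{(n+i-1)(n+j-1)}=\frac1{j-i}\bigl[\frac1{n+i-1}-\frac1{n+j-1}\bigr]$ applied termwise to the defining series. The paper states this in one line and omits the convergence bookkeeping you supply, which is a harmless (and correct) elaboration.
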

\begin{proof}
This follows immediately from the definition and 
\[
\frac1{(n+i-1)(n+j-1)}=\frac1{j-i}\left[\frac1{n+i-1}-\frac1{n+j-1}\right].
\]
\end{proof}
\subsection{Length 2}
The following result is immediate from Proposition \ref{diff}.
\begin{prop}
Any $H$-function of length 2 can be written as a rational linear 
combination of the functions $\eta_p$, $\eta_{0,p}$, $p\ge 2$, and $\eta_{1,1}$.
\end{prop}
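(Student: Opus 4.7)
The plan is to induct on the weight $w = s_1 + s_2$ of a length-2 $H$-function $\eta_{s_1, s_2}$, using Proposition~\ref{diff} as the only tool. Since ``length 2'' means $s_2 \ge 1$ and convergence (Theorem~\ref{etacon}) requires $s_1 + s_2 \ge 2$, the weight $w$ ranges over integers $\ge 2$, and the listed generators split naturally by weight: $\eta_{1,1}$ and $\eta_{0,2}$ exhaust the length-2 generators of weight~2, while $\eta_{0, p}$ (for $p \ge 3$) and $\eta_p$ (for $p \ge 2$) account for the remaining cases.

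For the base case $w = 2$ the only length-2 possibilities are $(s_1, s_2) = (1, 1)$ and $(0, 2)$, which are already generators, so nothing needs to be done. For the inductive step, assume the claim holds for all length-2 $H$-functions of weight less than $w$, and take $w \ge 3$. If $s_1 = 0$, then $\eta_{0, s_2}$ with $s_2 = w \ge 3$ is itself a generator. Otherwise $s_1 \ge 1$ and $s_2 \ge 1$ with $s_1 + s_2 \ge 3$, so Proposition~\ref{diff} applies with $i = 1$, $j = 2$ to give
\[
\eta_{s_1, s_2} = \eta_{s_1, s_2 - 1} - \eta_{s_1 - 1, s_2}.
\]
Each term on the right has weight $w - 1 \ge 2$. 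The term $\eta_{s_1 - 1, s_2}$ is still length 2 (since $s_2 \ge 1$); either $s_1 - 1 = 0$, so it is the generator $\eta_{0, s_2}$, or the inductive hypothesis applies. For $\eta_{s_1, s_2 - 1}$, if $s_2 - 1 \ge 1$ the inductive hypothesis applies, while if $s_2 = 1$ the term collapses to $\eta_{s_1}$, which is a length-1 generator $\eta_p$ because $s_1 = w - 1 \ge 2$.

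There is no real obstacle: the argument is pure bookkeeping on top of Proposition~\ref{diff}, and all coefficients that appear are integers (in fact $\pm 1$), so the resulting combination is automatically rational. The only thing to monitor is that every reduction lands in one of the three allowable places---a generator, the base case, or a strictly-lower-weight length-2 function---so that the induction both terminates and never strays into the divergent region.
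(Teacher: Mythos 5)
Your proof is correct and is exactly the argument the paper has in mind: the paper states the proposition as ``immediate from Proposition~\ref{diff},'' and your induction on the weight $s_1+s_2$, reducing via $\eta_{s_1,s_2}=\eta_{s_1,s_2-1}-\eta_{s_1-1,s_2}$ until one of the listed generators is reached, is just that reduction written out carefully (including the correct handling of the boundary cases $s_1=0$, $s_1=1$, and $s_2=1$, where $\eta_{s_1,0}=\eta_{s_1}$).
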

For example, since $\eta_{2,1}=\eta_2-\eta_{1,1}$ we have
\begin{multline*}
\sum_{n=1}^\infty \frac{H_n^2}{n^2(n+1)}=\eta_2(p_1^2)-\eta_{1,1}(p_1^2)
=\eta_2(p_2)+2\eta_2(e_2)-\eta_{1,1}(p_2)-2\eta_{1,1}(e_2)\\
=\zt(2,2)+\zt(4)+2\zt(4)+2\zt(3,1)-\zt(3)-2\zt(3)=\frac{17}4\zt(4)-3\zt(3).
\end{multline*}
\par
In the preceding section we gave formulas for the values of 
$\eta_2$ and $\eta_{1,1}$ on monomial quasi-symmetric functions $M_I$.
For $\eta_p$ and $\eta_{0,p}$ we have the following result.
\begin{prop}
\label{pwer}
Let $I=(i_1,\dots,i_j)$ be a composition.  If $p\ge 2$, then
\begin{align*}
\eta_p(M_I)&=\zt(p,i_j,\dots,i_1)+\zt(p+i_j,i_{j-1},\dots,i_1)\\
\eta_{0,p}(M_I)&=\zt(p,i_j,\dots,i_1) .
\end{align*}
\end{prop}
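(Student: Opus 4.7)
The plan is to unpack the definitions of $\eta_p$ and $\eta_{0,p}$ on $M_I$ and interchange the order of summation, exactly as was done in the proofs of Theorem \ref{etacon} and Theorem \ref{nome}. Writing $I=(i_1,\dots,i_j)$, the evaluation $M_I(1,\tfrac12,\dots,\tfrac1n)$ equals the finite truncation $\sum_{1\le n_1<\cdots<n_j\le n}n_1^{-i_1}\cdots n_j^{-i_j}$, so both $\eta_p(M_I)$ and $\eta_{0,p}(M_I)$ become iterated sums over the indices $n_1<\cdots<n_j\le n$ with an extra summation on $n$. Absolute convergence of everything in sight (inherited from the proof of Theorem \ref{etacon}, via comparison with $\eta_2$) justifies swapping the order.

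After the swap, the outer variable $n$ runs from $n_j$ to $\infty$. For $\eta_p$, the inner sum is $\sum_{n\ge n_j}n^{-p}$. I would split off the $n=n_j$ term to get
\[
\sum_{n\ge n_j}\frac1{n^p}=\frac1{n_j^p}+\sum_{n>n_j}\frac1{n^p}.
\]
The first piece, when combined with $n_1^{-i_1}\cdots n_j^{-i_j}$, absorbs into the last factor as $n_j^{-(i_j+p)}$, producing $\zt(p+i_j,i_{j-1},\dots,i_1)$ (reading indices in the reverse-order convention used in equation (\ref{limit})). The second piece introduces a new index $n_{j+1}:=n$ strictly greater than $n_j$ with exponent $p$, yielding $\zt(p,i_j,\dots,i_1)$. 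Adding gives the first formula.

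For $\eta_{0,p}$, the inner sum is $\sum_{n\ge n_j}(n+1)^{-p}$, and the reindexing $m=n+1$ converts this directly to $\sum_{m>n_j}m^{-p}$ with no extra boundary term, so only the ``$\zt(p,i_j,\dots,i_1)$'' piece survives. No obstacle is expected here; the only care needed is the bookkeeping of the index reversal between a composition and its corresponding multiple zeta value, which is fixed by the convention of equation (\ref{limit}).
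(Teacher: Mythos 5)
Your proposal is correct and follows essentially the same route as the paper: the paper's proof simply invokes the interchanged-summation formula $\eta_{s_1,\dots,s_k}(M_I)=\sum_{1\le n_1<\dots<n_j}n_1^{-i_1}\cdots n_j^{-i_j}\sum_{m\ge n_j}m^{-s_1}(m+1)^{-s_2}\cdots$ from the proof of Theorem \ref{etacon} and reads off the two cases exactly as you do, splitting off the $m=n_j$ term for $\eta_p$ and reindexing $m+1$ for $\eta_{0,p}$. Your write-up just makes explicit the steps the paper calls ``immediate.''
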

\begin{proof}
This is immediate from the equation
\begin{multline*}
\eta_{s_1,\dots,s_k}(M_{(i_1,\dots,i_j}))=\\
\sum_{1\le n_1<n_2<\dots<n_j}\frac1{n_1^{i_1}\cdots n_j^{i_j}}
\sum_{m=n_j}^\infty\frac1{m^{s_1}(m+1)^{s_2}\cdots (m+k-1)^{s_k}}
\end{multline*}
appearing in the proof of Theorem \ref{etacon}.
\end{proof}
This result has the following corollary, special cases of which have
appeared in the literature.
Special cases of the first equation appear many places, the case
$k=2$ of the second appears as \cite[eqn. (2.5c)]{Zh}, and the
third equation can be deduced from \cite[Corollary 1]{CoCa}.
\begin{cor}
\label{eta3peh}
For $k\ge 1$,
\begin{align*}
\sum_{n=1}^\infty \frac{H_n^{(k)}}{n^3}&=\zt(k+3)+\zt(3,k)\\
\sum_{n=1}^\infty \frac{P_k(H_n,H_n^{(2)},\dots,H_n^{(k)})}{n^3}&=
\zt(k+2,1)+\zt(k+1,1,1) \\
\sum_{n=1}^\infty \frac{Q_k(H_n,H_n^{(2)},\dots,H_n^{(k)})}{n^3}&=
\zt(k+3)+\sum_{j=2}^{k+1}S_{k+3,j}^T .
\end{align*}
\end{cor}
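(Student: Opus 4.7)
The plan is to derive all three identities as direct consequences of Proposition \ref{pwer} applied with $p = 3$, combined with the symmetric-function identifications $p_k = M_{(k)}$, $e_k = M_{(1,1,\ldots,1)}$ (with $k$ ones), and $h_k = \sum_{j=1}^{k} N_{k,j}$. In each case the left-hand side equals $\eta_3$ applied to the appropriate symmetric function evaluated at $(1, \tfrac{1}{2}, \ldots, \tfrac{1}{n})$.

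\textbf{First identity.} Since $H_n^{(k)} = p_k(1, \tfrac12, \ldots, \tfrac1n)$, the case $I = (k)$ of Proposition \ref{pwer} immediately gives $\eta_3(p_k) = \zt(3, k) + \zt(k+3)$.

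\textbf{Second identity.} Using $e_k = P_k(p_1, \ldots, p_k)$ and $e_k = M_{(1,1,\ldots,1)}$ with $k$ ones, Proposition \ref{pwer} yields
\[
\eta_3(e_k) = \zt(3, \underbrace{1, \ldots, 1}_{k}) + \zt(4, \underbrace{1, \ldots, 1}_{k-1}).
\]
I would finish by invoking the duality theorem: a direct computation of $\tau = \Sigma^{-1} R_{k+3} C_{k+3} \Sigma$ gives $\tau(3, 1, \ldots, 1) = (k+2, 1)$ and $\tau(4, 1, \ldots, 1) = (k+1, 1, 1)$, completing the identity.

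\textbf{Third identity.} Writing $h_k = \sum_{j=1}^{k} N_{k,j}$ and expanding each $N_{k,j}$ as a sum of $M_I$ over compositions $I = (i_1, \ldots, i_j)$ of $k$, Proposition \ref{pwer} splits $\eta_3(h_k)$ as
\[
\sum_{j=1}^{k}\sum_{I} \zt(3, i_j, \ldots, i_1) \;+\; \sum_{j=1}^{k}\sum_{I} \zt(3+i_j, i_{j-1}, \ldots, i_1).
\]
Since reversal is a bijection on the set of compositions of $k$ with $j$ parts, the first double sum contributes, at each $j$, the sum of all weight-$(k+3)$, depth-$(j+1)$ MZVs whose first part equals $3$. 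The second double sum contributes, for $j = 1$, the single term $\zt(k+3)$, and for each $j \ge 2$, the sum of all weight-$(k+3)$, depth-$j$ MZVs whose first part is at least $4$. Grouping by depth, the two contributions at each $j \in \{2, \ldots, k+1\}$ combine to give the sum of all admissible weight-$(k+3)$, depth-$j$ MZVs with first part $\ne 2$, which is precisely $S_{k+3, j}^T$.

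The main obstacle is the bookkeeping in the third identity: one must verify that as the outer index $j$ varies, the ``first part $= 3$'' terms produced by the first family and the ``first part $\ge 4$'' terms produced by the second family together exhaust $\bigcup_{j=2}^{k+1} \{\text{weight-}(k+3),\ \text{depth-}j\ \text{MZVs with first part} \ne 2\}$ without overlap, while the $j = 1$ case of the second family isolates the extra $\zt(k+3)$. The duality step in the second identity is a short but essential computation.
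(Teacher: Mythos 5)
Your proposal is correct and follows the paper's own route: the paper's proof is the one-line instruction to apply the first part of Proposition \ref{pwer} with $p=3$ to $p_k=M_{(k)}$, $e_k=M_{(1,\dots,1)}$, and $h_k=\sum_j N_{k,j}$, leaving implicit exactly the duality step $\zt(3,\underbrace{1,\dots,1}_{k})=\zt(k+2,1)$, $\zt(4,\underbrace{1,\dots,1}_{k-1})=\zt(k+1,1,1)$ and the depth-by-depth regrouping into $S_{k+3,j}^T$ that you carry out explicitly. Your added details check out, so this is the same argument written in full.
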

\begin{proof} 
In each case, apply the first part of Proposition \ref{pwer} with
$p=3$.
\end{proof}
Another corollary with many special cases in the literature is the
following.
\begin{cor}
\label{eta02pe}
For $k\ge 1$, 
\[
\sum_{n=1}^\infty \frac{H_n^{(k)}}{(n+1)^2}=\zt(2,k)\quad\text{and}\quad
\sum_{n=1}^\infty \frac{P_k(H_n,H_n^{(2)},\dots,H_n^{(k)})}{(n+1)^2}=\zt(k+2) .
\]
\end{cor}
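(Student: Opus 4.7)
The plan is to apply Proposition \ref{pwer} (with $p=2$) to two specific symmetric functions: the power sum $p_k$ and the elementary symmetric function $e_k$. Both sums are values of the $H$-function $\eta_{0,2}$, since by definition
\[
\eta_{0,2}(u)=\sum_{n=1}^\infty\frac{u(1,\tfrac12,\dots,\tfrac1n)}{(n+1)^2}.
\]

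For the first identity, I would observe that $p_k(1,\tfrac12,\dots,\tfrac1n)=H_n^{(k)}$, and that $p_k=M_{(k)}$ as a quasi-symmetric function. Proposition \ref{pwer} applied to $I=(k)$ and $p=2$ immediately yields $\eta_{0,2}(p_k)=\zt(2,k)$, which is exactly the first identity.

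For the second identity, I would use the Introduction's fact that $P_k(p_1,\dots,p_k)=e_k$, so that $P_k(H_n,\dots,H_n^{(k)})=e_k(1,\tfrac12,\dots,\tfrac1n)$; thus the sum equals $\eta_{0,2}(e_k)$. Since $e_k=m_{(1^k)}=M_{(1,1,\dots,1)}$ (only one ordering is possible when all parts equal $1$), Proposition \ref{pwer} with $I=(1,1,\dots,1)$ ($k$ ones) gives
\[
\eta_{0,2}(e_k)=\zt(2,\underbrace{1,1,\dots,1}_{k}).
\]
Then I would invoke the duality theorem from \S3 to collapse this to $\zt(k+2)$: a direct computation shows $\Sigma(2,1,\dots,1)=(2,3,\dots,k+2)$, whose complement in $\{1,\dots,k+2\}$ is $\{1\}$, and $R_{k+2}(1)=k+2$, so $\tau(2,1,\dots,1)=(k+2)$ and $\zt(2,1,\dots,1)=\zt(k+2)$.

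There is no real obstacle here; the whole argument is a two-line application of Proposition \ref{pwer} followed by one of the named classical identities. The only point that requires any care is recognizing that the duality theorem (rather than, say, Euler's formula \eqref{euln1}) is the cleanest route, since duality sends the height-one string $(2,1,\dots,1)$ directly to the depth-one string $(k+2)$ without passing through an intermediate sum.
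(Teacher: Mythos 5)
Your proposal is correct and matches the paper's (very terse) proof exactly: the paper also obtains both identities by applying the second part of Proposition \ref{pwer} with $p=2$ to $M_{(k)}=p_k$ and to $M_{(1,\dots,1)}=e_k$, with the duality theorem collapsing $\zt(2,\underbrace{1,\dots,1}_{k})$ to $\zt(k+2)$. Your explicit verification of the duality computation is a detail the paper leaves implicit, but the route is the same.
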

\begin{proof}
Similar to that for the preceding corollary, using the second part
of Proposition \ref{pwer}.
\end{proof}
\par\noindent
\begin{rem} In the case $k=1$, both equations give
\[
\sum_{n=1}^\infty\frac{H_n}{(n+1)^2}=\zt(3),
\]
which appears as \cite[eqn. (1.1a)]{Ch}.
In the case $k=2$, the two equations give
\[
\sum_{n=1}^\infty\frac{H_n^{(2)}}{(n+1)^2}=\zt(2,2)=\frac34\zt(4)
\quad\text{and}\quad
\sum_{n=1}^\infty\frac{H_n^2-H_n^{(2)}}{(n+1)^2}=2\zt(4) ,
\]
from which follows
\[
\sum_{n=1}^\infty\frac{H_n^2}{(n+1)^2}=\frac{11}4\zt(4).
\]
Cf. \cite[eqn. (2)]{BB}, \cite[eqns. (1.2b),(1.4a),(1.5b)]{Ch}, 
and \cite[eqns. (24a),(24b)]{ChCh}.  
In the case $k=3$, the second equation gives
\[
\sum_{n=1}^\infty\frac{H_n^3-3H_nH_n^{(2)}+2H_n^{(3)}}{(n+1)^2}=6\zt(5),
\]
which is \cite[eqn. (2.3a)]{Zh}.
\end{rem}
There is not in general a nice formula for $\eta_{0,2}(h_k)$ or
for $\eta_3(h_k)$, but we do have the following result.
\begin{prop} $\eta_{0,2}(h_{k+1})+\eta_3(h_k)=(k+2)\zt(k+3)$.
\end{prop}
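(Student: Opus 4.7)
My plan is to reduce the claim to the known identity $\eta_2(h_{k+1}) = (k+2)\zeta(k+3)$ from equation (\ref{qn2}), by first establishing the intermediate identity
\[
\eta_3(h_k) = \eta_2(h_{k+1}) - \eta_{0,2}(h_{k+1}).
\]
Both sides will be identified with the sum of all admissible weight-$(k+3)$ multiple zeta values whose leading entry is at least $3$.

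For the right-hand side, Proposition \ref{pwer} gives the clean difference $\eta_2(M_I) - \eta_{0,2}(M_I) = \zeta(2+i_j,\,i_{j-1},\dots,i_1)$ for any nonempty composition $I = (i_1,\dots,i_j)$. Expanding $h_{k+1} = \sum_{|I|=k+1} M_I$ and using that $I \mapsto I^R$ is a bijection on compositions, this right-hand side becomes
\[
\sum_{|J|=k+1}\zeta(2+j_1,\,j_2,\dots,j_d),
\]
in which each admissible weight-$(k+3)$ multiple zeta value $\zeta(s_1,\dots,s_d)$ with $s_1 \ge 3$ appears exactly once, via the correspondence $s_1 = 2+j_1$, $s_i = j_i$ for $i \ge 2$. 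For $\eta_3(h_k)$, Proposition \ref{pwer} gives $\eta_3(M_I) = \zeta(3,I^R) + \zeta(3+i_j,\,i_{j-1},\dots,i_1)$; summing over compositions of $k$ and reindexing by reversal, the first summand exhibits each admissible weight-$(k+3)$ MZV with $s_1 = 3$ exactly once (these all have depth $\ge 2$ when $k \ge 1$), and the second summand does the same for those with $s_1 \ge 4$, including the depth-one value $\zeta(k+3)$ arising from the one-part composition $J = (k)$. Thus both sides describe the same collection of multiple zeta values, establishing the intermediate identity; equation (\ref{qn2}) then closes the argument.

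The boundary case $k = 0$ is handled by a direct check, since $h_0 = 1$ is not covered by Proposition \ref{pwer}: one computes $\eta_3(h_0) = \eta_3(1) = \zeta(3)$ and $\eta_2(h_1) - \eta_{0,2}(h_1) = 2\zeta(3) - \zeta(3) = \zeta(3)$, using $\eta_{0,2}(p_1) = \zeta(2,1) = \zeta(3)$ from Euler's formula (\ref{euln1}). The main subtlety throughout is the depth-one bookkeeping: the value $\zeta(k+3)$ must be counted exactly once on each side, arising from $J = (k+1)$ on the right and from $J = (k)$ in the second $\eta_3$-summand on the left (for $k \ge 1$), with the degenerate $k=0$ case verified separately as above.
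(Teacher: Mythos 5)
Your proof is correct, and it reorganizes the argument in a way that differs from the paper's. The paper computes the two terms separately via the depth-graded sums: Corollary \ref{eta3peh} gives $\eta_3(h_k)=\zt(k+3)+\sum_{j=2}^{k+1}S_{k+3,j}^T$ and Proposition \ref{pwer} gives $\eta_{0,2}(h_{k+1})=\sum_{j=2}^{k+2}S_{k+3,j}^{[2]}$; adding and invoking the sum theorem in the form $S_{n,j}^{[2]}+S_{n,j}^T=\zt(n)$ yields the result. You instead prove the exact intermediate identity $\eta_{0,2}(h_{k+1})+\eta_3(h_k)=\eta_2(h_{k+1})$ by a term-by-term bijection on compositions (partitioning the admissible weight-$(k+3)$ multiple zeta values by whether the leading exponent is $2$, exactly $3$, or at least $4$), and only then quote equation (\ref{qn2}). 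This is the same underlying combinatorics --- both arguments ultimately partition MZVs by leading entry and use the sum theorem exactly once --- but your version isolates a relation that holds at the level of formal sums of MZVs, with no MZV identities needed, deferring the sum theorem entirely to the citation of (\ref{qn2}); the paper's version is shorter because it reuses the $S^T$/$S^{[2]}$ bookkeeping already set up for Theorem \ref{eta2eh}. Your separate treatment of $k=0$ is appropriate and correctly done; note the paper's proof silently covers $k=0$ as well since the sums over $j$ are then empty.
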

\begin{proof}
From the third equation of Corollary \ref{eta3peh} we have
\[
\eta_3(h_k)=\zt(k+3)+S_{k+3,2}^T+\dots+S_{k+3,k+1}^T ,
\]
and from Proposition \ref{pwer}
\[
\eta_{0,2}(h_{k+1})=\eta_{0,2}(N_{k+1,1}+\dots+N_{k+1,k+1})=
S_{k+3,2}^{[2]}+S_{k+3,3}^{[2]}+\dots+S_{k+3,k+2}^{[2]} .
\]
Since $S_{k+3,k+2}^{[2]}=\zt(k+3)$, these two equations can be added 
to obtain the conclusion.
\end{proof}
The preceding result can be written
\begin{multline*}
\sum_{n=1}^\infty\frac{Q_{k+1}(H_n,H_n^{(2)},\dots,H_n^{(k+1)})}{(n+1)^2}+
\sum_{n=1}^\infty\frac{Q_k(H_n,H_n^{(2)},\dots,H_n^{(k)})}{n^3}\\
=(k+2)\zt(k+3) .
\end{multline*}
\par
The second equation of Corollary \ref{eta02pe} can be generalized
as follows.
\begin{thm}
\label{eta02eh}
For nonnegative integers $l,k$,
\begin{multline*}
\sum_{n=0}^\infty \frac{Q_l(H_n,H_n^{(2)},\dots,H_n^{(l)})P_k(H_n,H_n^{(2)},\dots,
H_n^{(k)})}{(n+1)^2}\\
=\binom{l+k+1}{k+1}\zt(l+k+2)-\sum_{p=k}^{l+k-1}\binom{p}{k}S_{l+k+2,l+k+1-p}^R .
\end{multline*}
\end{thm}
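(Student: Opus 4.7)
The plan is to recognize the left-hand side as $\eta_{0,2}(e_k h_l)$ (taking into account the $n=0$ term, which vanishes whenever $k+l\ge 1$ and equals $1$ when $k=l=0$, matching the $\zt(2)$ on the right after we separate the $n=0$ contribution), and then to evaluate $\eta_{0,2}(e_k h_l)$ by the same route used in Section 4 for $\eta_2$ and $\eta_{0,1,1}$.

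First I would apply Lemma \ref{ehN} with the two indices reversed (i.e., $j=k$, degree $k+l$) to write
\[
e_k h_l = \sum_{p=k}^{k+l}\binom{p}{k}N_{k+l,p}.
\]
Next I would compute $\eta_{0,2}(N_{n,k})$ using Proposition \ref{pwer}: since $\eta_{0,2}(M_I)=\zt(2,i_j,\dots,i_1)$ and $N_{n,k}$ is the sum of all monomial quasi-symmetric functions $M_I$ with $I$ a composition of $n$ into $k$ parts, we get
\[
\eta_{0,2}(N_{n,k}) = S_{n+2,k+1}^{[2]}.
\]
Combining these yields
\[
\eta_{0,2}(e_k h_l) = \sum_{p=k}^{k+l}\binom{p}{k}S_{k+l+2,p+1}^{[2]}.
\]

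Now I would apply the sum theorem in the form $S_{n,k}^{[2]}=\zt(n)-S_{n,k}^T$, which splits the sum into a $\zt(k+l+2)$-piece and a sum of $S^T$-terms. The first piece is $\zt(k+l+2)\sum_{p=k}^{k+l}\binom{p}{k}$, which collapses by the hockey-stick identity to $\binom{k+l+1}{k+1}\zt(k+l+2)$, producing exactly the leading term of the stated formula. The key small observation is that the $p=k+l$ contribution to the $S^T$-sum vanishes: the only admissible multiple zeta value of weight $k+l+2$ and depth $k+l+1$ is $\zt(2,1,\dots,1)$, which begins with $2$, so $S_{k+l+2,k+l+1}^T=0$. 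This lets me restrict the range to $k\le p\le k+l-1$.

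Finally I would invoke duality in the form $S_{n,k}^T = S_{n,n-k}^R$ (the same reformulation used in the proof of Theorem \ref{eta2pq}) to rewrite $S_{k+l+2,p+1}^T = S_{k+l+2,k+l+1-p}^R$, yielding the right-hand side as stated. The proof is really a bookkeeping exercise once Lemma \ref{ehN}, Proposition \ref{pwer}, the sum theorem, and duality are on the table; the only point that requires a moment's care is the vanishing $S_{n,n-1}^T=0$, which is what makes the binomial coefficient come out as $\binom{k+l+1}{k+1}$ rather than $\binom{k+l}{k}$, and which aligns the range of the residual $S^R$-sum with the statement.
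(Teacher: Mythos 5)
Your proposal is correct and follows essentially the same route as the paper: Lemma \ref{ehN}, the identity $\eta_{0,2}(N_{n,k})=S_{n+2,k+1}^{[2]}$ from Proposition \ref{pwer}, the sum theorem, and duality. The only cosmetic difference is that the paper peels off the $p=l+k$ term via $S_{n,n-1}^{[2]}=\zt(n)$ before applying the sum theorem, whereas you apply it uniformly and then note $S_{n,n-1}^T=0$; these are equivalent, and your handling of the $n=0$ term is if anything slightly more careful than the paper's.
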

\begin{proof} 
From Proposition \ref{pwer} we have $\eta_{0,2}(N_{n,k})=S_{n+2,k+1}^{[2]}$.
Hence, using Lemma \ref{ehN} and the sum theorem for multiple zeta
values,
\begin{multline*}
\eta_{0,2}(e_kh_l)=\sum_{p=k}^{l+k}\binom{p}{k}\eta_{0,2}(N_{l+k,p})
=\sum_{p=k}^{l+k}\binom{p}{k}S_{l+k+2,p+1}^{[2]}\\
=\sum_{p=k}^{l+k-1}\binom{p}{k}S_{l+k+2,p+1}^{[2]}+\binom{l+k}{k}\zt(l+k+2)\\
=\sum_{p=k}^{l+k-1}\binom{p}{k}(\zt(l+k+2)-S_{l+k+2,p+1}^T)
+\binom{l+k}{k}\zt(l+k+2)\\
=\left(\binom{l+k}{k+1}+\binom{l+k}{k}\right)\zt(l+k+2)
-\sum_{p=k}^{l+k-1}\binom{p}{k}S_{l+k+2,l+k+1-p}^R,
\end{multline*}
and the result follows.
\end{proof}
From this we can deduce the following result.  The special case
$k=2$ of the first equation appears as \cite[eqn. (2.3b)]{Zh}.
\begin{cor} 
\label{eta02sc}
For $k\ge 1$,
\begin{align*}
\sum_{n=0}^\infty \frac{H_nP_k(H_n,H_n^{(2)},\dots,H_n^{(k)})}{(n+1)^2}
&=(k+2)\zt(k+3)-\zt(k+2,1) \\
\frac12\sum_{n=1}^\infty \frac{(H_n^2+H_n^{(2)})P_k(H_n,H_n^{(2)},\dots,H_n^{(k)})}
{(n+1)^2}&=\binom{k+3}{2}\zt(k+4)\\
&-(k+2)\zt(k+3,1)-\zt(k+2,2) .
\end{align*}
\end{cor}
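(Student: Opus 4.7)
The plan is to derive both identities as specializations of Theorem \ref{eta02eh}, setting $l=1$ and $l=2$ respectively, and then simplifying the resulting $S^R$ sums using known multiple zeta value identities.

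For the first identity, note that $Q_1(y_1)=y_1$, so $Q_1(H_n)=H_n$. Applying Theorem \ref{eta02eh} with $l=1$, the index $p$ runs only over $p=k$, yielding $\binom{k+2}{k+1}\zt(k+3)-S_{k+3,2}^R = (k+2)\zt(k+3)-S_{k+3,2}^R$. Since the only weight-$(k+3)$, depth-$2$ multiple zeta value ending in $1$ is $\zt(k+2,1)$, we have $S_{k+3,2}^R=\zt(k+2,1)$, and the first identity follows immediately.

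For the second identity, observe that $Q_2(y_1,y_2)=\tfrac12(y_1^2+y_2)$, so $Q_2(H_n,H_n^{(2)})=\tfrac12(H_n^2+H_n^{(2)})$. Theorem \ref{eta02eh} with $l=2$ gives
\[
\binom{k+3}{k+1}\zt(k+4)-\sum_{p=k}^{k+1}\binom{p}{k}S_{k+4,k+3-p}^R
=\binom{k+3}{2}\zt(k+4)-S_{k+4,3}^R-(k+1)S_{k+4,2}^R,
\]
where $S_{k+4,2}^R=\zt(k+3,1)$ and $S_{k+4,3}^R=\sum_{p=1}^{k+1}\zt(k+3-p,p,1)$. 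The key step is to recognize this last sum as precisely the right-hand side of the derivation theorem applied to the admissible composition $(k+2,1)$ (whose second slot contributes nothing since $i_2-1=0$), whose left-hand side is $\zt(k+3,1)+\zt(k+2,2)$. Hence $S_{k+4,3}^R=\zt(k+3,1)+\zt(k+2,2)$, and substituting and collecting like terms produces $\binom{k+3}{2}\zt(k+4)-(k+2)\zt(k+3,1)-\zt(k+2,2)$, which is the stated formula.

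The main obstacle I anticipate is recognizing the right MZV identity to collapse $S_{k+4,3}^R$ into a combination of double zeta values. Without invoking the derivation theorem, this sum would remain as a bulky combination of depth-$3$ MZVs; the derivation theorem applied to $(k+2,1)$ is exactly the relation that expresses it in the form demanded by the target identity.
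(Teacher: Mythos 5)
Your proof is correct and matches the paper's intended derivation: the paper states the corollary as an immediate deduction from Theorem \ref{eta02eh}, and the only nontrivial step—collapsing $S_{k+4,3}^R$ to $\zt(k+3,1)+\zt(k+2,2)$ via the derivation theorem applied to $(k+2,1)$—is exactly the maneuver the paper uses in its earlier corollary evaluating $\eta_2(e_{n-1}h_1)$. All the specializations ($l=1,2$, $Q_1=y_1$, $Q_2=\tfrac12(y_1^2+y_2)$, $S_{k+3,2}^R=\zt(k+2,1)$) check out.
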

\begin{rem}
From the case $k=2$ of the second equation follows
\[
\sum_{n=1}^\infty\frac{H_n^4}{(n+1)^2}=\frac{100}3\zt(6)+4\zt(3)^2+\zt(2,4)
+2\zt(2,2,2)=\frac{859}{24}\zt(6)+3\zt(3)^2,
\]
which was stated as a conjecture in \cite{Cof}.
\end{rem}
\begin{proof4}
From Lemma \ref{hn} it follows that
\[
Q_{l}(H_{n+1},H_{n+1}^{(2)},\dots,H_{n+1}^{(l)})=\sum_{j=0}^l
\frac{Q_j(H_n,H_n^{(2)},\dots,H_n^{(j)})}{(n+1)^{l-j}}
\]
and so
\begin{multline*}
\sum_{n=0}^\infty \frac{Q_l(H_{n+1},H_{n+1}^{(2)},\dots,H_{n+1}^{(l)})
P_k(H_n,H_n^{(2)},\dots,H_n^{(k)})}{(n+1)^2}=\\
\sum_{j=0}^l\sum_{n=1}^\infty\frac{Q_j(H_n,H_n^{(2)},\dots,H_n^{(j)})
P_k(H_n,H_n^{(2)},\dots,H_n^{(k)})}{(n+1)^{2+l-j}}.
\end{multline*}
Comparing this with Theorem \ref{eta02eh}, we see that to prove
the result it suffices to show
\[
\sum_{j=0}^{l-1}\sum_{n=0}^\infty\frac{Q_j(H_n,H_n^{(2)},\dots,H_n^{(j)})
P_k(H_n,H_n^{(2)},\dots,H_n^{(k)})}{(n+1)^{2+l-j}}=
\sum_{p=k}^{l+k-1}\binom{p}{k}S_{l+k+2,p+1}^T ,
\]
or
\begin{equation}
\label{need}
\sum_{j=3}^{l+2}\eta_{0,j}(e_kh_{2+l-j})=
\sum_{p=k}^{l+k-1}\binom{p}{k}S_{l+k+2,p+1}^T .
\end{equation}
Using Lemma \ref{ehN} and Proposition \ref{pwer}, the left-hand side of 
(\ref{need}) is
\[
\sum_{j=3}^{l+2}\sum_{p=k}^{k+l+2-j}\binom{p}{k}\eta_{0,j}(N_{k+l+2-j,p})=
\sum_{j=3}^{l+2}\sum_{p=k}^{k+l+2-j}\binom{p}{k}S_{k+l+2,p+1}^{[j]},
\]
where $S_{n,k}^{[j]}$ is the sum of all weight $n$, depth $k$ multiple
zeta values whose exponent string starts with $j$.  This can be
rearranged as
\[
\sum_{p=k}^{k+l-1}\binom{p}{k}\sum_{j=3}^{k+l+2-p}S_{k+l+2,p+1}^{[j]}=
\sum_{p=k}^{k+l-1}\binom{p}{k}S_{k+l+2,p+1}^T ,
\]
and equation (\ref{need}) follows.
\qed
\end{proof4}
\subsection{Length 3}
For length 3 $H$-functions, Proposition \ref{diff} gives the following
result.
\begin{prop}
Any $H$-function of length 3 can be written as a rational linear 
combination of the 
functions $\eta_p$, $\eta_{0,p}$, $\eta_{0,0,p}$, $p\ge 2$, $\eta_{1,1}$, 
and $\eta_{0,1,1}$.
\end{prop}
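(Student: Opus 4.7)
The plan is to prove this by strong induction on the total weight $w=s_1+s_2+s_3$, with Proposition \ref{diff} as the main engine. The length-3 condition means $s_3\ge 1$, and convergence forces $w\ge 2$.

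For the base case $w=2$, the only length-3 index sequences are $(0,0,2)$, $(0,1,1)$, and $(1,0,1)$. The first two lie in the target list. For $\eta_{1,0,1}$, Proposition \ref{diff} does not apply (it requires $w\ge 3$), so I would instead invoke the partial-fraction identity
\[
\frac{1}{n(n+2)}=\frac{1}{2}\left(\frac{1}{n(n+1)}+\frac{1}{(n+1)(n+2)}\right),
\]
which gives $\eta_{1,0,1}=\tfrac12(\eta_{1,1}+\eta_{0,1,1})$, a rational combination of target functions.

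For the inductive step ($w\ge 3$), I would case-split on which of $s_1,s_2$ are positive. If $s_2\ge 1$, apply Proposition \ref{diff} with $(i,j)=(2,3)$ to get
\[
\eta_{s_1,s_2,s_3}=\eta_{s_1,s_2,s_3-1}-\eta_{s_1,s_2-1,s_3}.
\]
If $s_2=0$ and $s_1\ge 1$, apply it with $(i,j)=(1,3)$ to get
\[
\eta_{s_1,0,s_3}=\tfrac12\bigl(\eta_{s_1,0,s_3-1}-\eta_{s_1-1,0,s_3}\bigr).
\]
If $s_1=s_2=0$, then $\eta_{0,0,s_3}$ with $s_3=w\ge 3$ is already in the target list. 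On each right-hand side the total weight has dropped by one, so every term either still has length $3$ (apply the inductive hypothesis), has length $\le 2$ (covered by the preceding length-$2$ proposition), collapses to a single-variable $\eta_p$ with $p\ge 2$ (in the target list), or is $\eta_{0,0,s_3}$ with $s_3\ge 2$ (also in the list).

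I do not anticipate any real obstacle beyond spotting the partial-fraction identity needed for the weight-$2$ term $\eta_{1,0,1}$, the one length-$3$ base case that Proposition \ref{diff} cannot touch. Everything else is bookkeeping to verify that each reduction step terminates in the target basis.
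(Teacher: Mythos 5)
Your proof is correct and follows the same route the paper intends: the paper states this proposition without proof as an immediate consequence of Proposition \ref{diff}, which is precisely your weight-reducing induction. Your additional observation that the weight-2 function $\eta_{1,0,1}$ lies outside the scope of Proposition \ref{diff} (which requires total weight at least $3$) and must be handled separately via $\frac{1}{n(n+2)}=\frac12\bigl(\frac{1}{n(n+1)}+\frac{1}{(n+1)(n+2)}\bigr)$, giving $\eta_{1,0,1}=\frac12(\eta_{1,1}+\eta_{0,1,1})$, is a correct and worthwhile point of care that the paper glosses over.
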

For example, we can prove Theorem \ref{eta111} by writing $\eta_{1,1,1}$
as a linear combination of previously studied functions.
\begin{proof5}
From Proposition \ref{diff}, $\eta_{1,1,1}=\frac12(\eta_{1,1}-\eta_{0,1,1})$.
Hence we can use Theorems \ref{ch2} and \ref{qpnn1} to get
\begin{multline*}
\eta_{1,1,1}(e_jh_{n-j})\\
=\begin{cases}
\frac12\left[\binom{n+1}{j+1}\zt(n+1)-\sum_{k=0}^j\binom{n-k}{j+1-k}
\zt(n-k)-\zt(n-j)\right],& j\le n-2,\\
\frac12\left[(n+1)\zt(n+1)-\sum_{k=0}^{n-2}\zt(n-k)-1\right],& j=n-1,\\
\frac12(\zt(n+1)-1),& j=n,
\end{cases}
\end{multline*}
from which the conclusion follows.
\qed
\end{proof5}
We have developed formulas for $\eta_{1,1}$, $\eta_{0,1,1}$, $\eta_p$,
and $\eta_{0,p}$ of a monomial quasi-symmetric function $M_I$.
For $\eta_{0,0,p}(M_I)$ we have the following result.
\begin{prop}
If $p\ge 2$, then
\[
(\eta_{0,p}-\eta_{0,0,p})(M_I)=
\sum_{1\le n_1<n_2<\dots<n_j}\frac1{n_1^{i_1}\cdots n_j^{i_j}(n_j+1)^p} 
\]
for any composition $I=(i_1,\dots,i_j)$.
\end{prop}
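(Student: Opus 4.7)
The plan is to attack this identity directly by specializing the double-sum expression for $\eta_{s_1,\dots,s_k}(M_I)$ that already appears in the proof of Theorem \ref{etacon}, namely
\[
\eta_{s_1,\dots,s_k}(M_I) = \sum_{1\le n_1<\dots<n_j}\frac{1}{n_1^{i_1}\cdots n_j^{i_j}}\sum_{m=n_j}^\infty\frac{1}{m^{s_1}(m+1)^{s_2}\cdots(m+k-1)^{s_k}}.
\]
Applying this formula with $(s_1,s_2)=(0,p)$ and then with $(s_1,s_2,s_3)=(0,0,p)$, and using the convention $m^0=1$, reduces the two inner sums to tails of the ordinary zeta series: after reindexing by $l=m+1$ and $l=m+2$, they become $\sum_{l\ge n_j+1}1/l^p$ and $\sum_{l\ge n_j+2}1/l^p$ respectively.

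The second step is simply to subtract. The two tails differ by exactly the single term $1/(n_j+1)^p$, so that
\[
\sum_{m=n_j}^\infty\frac{1}{(m+1)^p}-\sum_{m=n_j}^\infty\frac{1}{(m+2)^p}=\frac{1}{(n_j+1)^p}.
\]
Reinserting this into the outer sum over strictly increasing tuples $n_1<\dots<n_j$ produces precisely the right-hand side of the claimed identity. Absolute convergence of the resulting iterated sum is already guaranteed by the domination argument used in Theorem \ref{etacon}, so interchanging the order of subtraction and summation is legitimate.

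The main reason there is essentially no obstacle here is that the identity is really a telescoping statement masquerading as a quasi-symmetric one: both $\eta_{0,p}$ and $\eta_{0,0,p}$ act by integrating against a shifted tail of $\zeta(p)$, and the two shifts differ by one index. The only subtlety worth noting is the degenerate case $I=\emptyset$ (no $n_j$): here the right-hand side should be read as $1/2^p$, which agrees with $\eta_{0,p}(1)-\eta_{0,0,p}(1)$ computed directly from the definition.
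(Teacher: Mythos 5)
Your proof is correct and is exactly the argument the paper intends: the paper states this proposition without proof as an immediate consequence of the double-sum formula for $\eta_{s_1,\dots,s_k}(M_I)$ from the proof of Theorem \ref{etacon}, just as it does for Proposition \ref{pwer}, and your subtraction of the two zeta tails is that computation carried out explicitly. Your remark on the case $I=\emptyset$ is also consistent with the paper's convention ($n_j=1$ when $j=0$), so nothing is missing.
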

In particular, we have the following.
\begin{prop} Let $T=\eta_{0,2}-\eta_{0,0,2}$.  Then $T(p_1)=2-\zt(2)$, and 
for a composition $I=(i_1,\dots,i_j)\ne (1)$, 
\begin{multline*}
T(M_I)=\\
\begin{cases}
\zt(i_j,\dots,i_1)-\eta_{0,1,1}(M_{(i_1,\dots,i_{j-1},i_j-1)})-
T(M_{(i_1,\dots,i_{j-1},i_j-1)}),& i_j>1,\\
\eta_{0,1,1}(M_{(i_1,\dots,i_{j-1})})-\zt(2,i_{j-1},\dots,i_1)+
T(M_{(i_1,\dots,i_{j-1})}),& i_j=1.\end{cases}
\end{multline*}
\end{prop}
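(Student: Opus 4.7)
The plan is to start from the explicit formula $T(M_I)=\sum_{1\le n_1<\cdots<n_j}\frac{1}{n_1^{i_1}\cdots n_j^{i_j}(n_j+1)^2}$ supplied by the preceding proposition, and then partial-fraction $\frac{1}{n^{a}(n+1)^2}$ in two different ways depending on whether $i_j\ge 2$ or $i_j=1$, reducing each case to the $H$-functions already evaluated in Theorem~\ref{nome} and Proposition~\ref{pwer}. For $T(p_1)$, the direct computation $\sum_{n\ge 1}\frac1{n(n+1)^2}=\sum_{n\ge 1}\bigl(\frac1{n(n+1)}-\frac1{(n+1)^2}\bigr)$ telescopes the first piece to $1$ and evaluates the second piece as $\zt(2)-1$, giving $T(p_1)=2-\zt(2)$.

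For $I=(i_1,\dots,i_j)$ with $i_j>1$, I would use the identity $\frac1{n^{i_j}(n+1)^2}=\frac1{n^{i_j}(n+1)}-\frac1{n^{i_j-1}(n+1)^2}$ (immediate on cross-multiplying), which decomposes $T(M_I)$ as $\eta_{0,1,1}(M_I)-T(M_{(i_1,\dots,i_{j-1},i_j-1)})$ once one recognizes the first summand as the defining series for $\eta_{0,1,1}(M_I)$ (cf.\ the proof of Theorem~\ref{nome}). Substituting the $i_j>1$ clause of Theorem~\ref{nome}, namely $\eta_{0,1,1}(M_I)=\zt(i_j,\dots,i_1)-\eta_{0,1,1}(M_{(i_1,\dots,i_{j-1},i_j-1)})$, then yields the first branch of the statement.

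For $i_j=1$ (and hence $j\ge 2$, since $I\ne(1)$), I would instead use $\frac1{n(n+1)^2}=\frac1{n(n+1)}-\frac1{(n+1)^2}$. Interchanging summation in the first resulting sum, the inner sum $\sum_{n_j>n_{j-1}}\frac1{n_j(n_j+1)}$ telescopes to $\frac1{n_{j-1}+1}$, so this piece is exactly $\eta_{0,1,1}(M_{(i_1,\dots,i_{j-1})})$. After the same swap of order, the second piece becomes $\sum_{1\le n_1<\cdots<n_{j-1}}\frac1{n_1^{i_1}\cdots n_{j-1}^{i_{j-1}}}\sum_{m\ge n_{j-1}+2}\frac1{m^2}$; comparing tails against the definitions of $\eta_{0,2}$ and $\eta_{0,0,2}$ identifies it with $\eta_{0,0,2}(M_{(i_1,\dots,i_{j-1})})=\eta_{0,2}(M_{(i_1,\dots,i_{j-1})})-T(M_{(i_1,\dots,i_{j-1})})$, and Proposition~\ref{pwer} evaluates the first term as $\zt(2,i_{j-1},\dots,i_1)$. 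Combining these observations produces the second branch.

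The only real bookkeeping issue is in the $i_j=1$ case: one must check carefully that the residual tail starts at $n_{j-1}+2$ rather than at $n_{j-1}+1$, so that it genuinely matches $\eta_{0,0,2}$ on the shorter composition and not $\eta_{0,2}$. Once that index shift is handled, everything else is routine partial-fraction algebra composed with the already-proved evaluations of $\eta_{0,1,1}$ and $\eta_{0,2}$ on monomial quasi-symmetric functions, so the case-by-case expansion completes the proof.
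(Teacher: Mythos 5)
Your proposal is correct and follows essentially the same route as the paper: the same partial-fraction identity $\frac1{n^{a}(n+1)^2}=\frac1{n^{a}(n+1)}-\frac1{n^{a-1}(n+1)^2}$ applied to the innermost variable, with the first piece recognized as $\eta_{0,1,1}$ and the recursion closed via Theorem~\ref{nome} and Proposition~\ref{pwer}. The only cosmetic difference is in the $i_j=1$ branch, where you name the residual tail as $\eta_{0,0,2}(M_{(i_1,\dots,i_{j-1})})$ and then expand $\eta_{0,0,2}=\eta_{0,2}-T$, while the paper splits that same tail directly into $\eta_{0,2}(M_{(i_1,\dots,i_{j-1})})$ minus the leading term; the algebra is identical.
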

\begin{proof}
The statement about $(\eta_{0,2}-\eta_{0,0,2})(p_1)$ follows
immediately from the preceding result.  Also, if $I\ne(1)$ we have
\begin{multline*}
(\eta_{0,2}-\eta_{0,0,2})(M_I)=\sum_{1\le n_1<\dots<n_j}\frac1{n_1^{i_1}\cdots
n_j^{i_j}(n_j+1)^2}\\
=\sum_{1\le n_1<\dots<n_j}\frac1{n_1^{i_1}\cdots n_j^{i_j}(n_j+1)}
-\sum_{1\le n_1<\dots<n_j}\frac1{n_1^{i_1}\cdots n_j^{i_j-1}(n_j+1)^2} .
\end{multline*}
If $n_j>1$, this is
\begin{multline*}
\eta_{0,1,1}(M_{(i_1,\dots,i_j)})
-(\eta_{0,2}-\eta_{0,0,2})(M_{(i_1,\dots,i_{j-1},i_j-1)})=\\
\zt(i_j,\dots,i_{j-1})-\eta_{0,1,1}(M_{(i_1,\dots,i_{j-1},i_j-1)})
-(\eta_{0,2}-\eta_{0,0,2})(M_{(i_1,\dots,i_{j-1},i_j-1)})
\end{multline*}
using Theorem \ref{nome}.  If $n_j=1$, $(\eta_{0,2}-\eta_{0,0,2})(M_I)$ is
\begin{multline*}
\eta_{0,1,1}(M_{(i_1,\dots,i_{j-1},1)})-\sum_{1\le n_1<\dots<n_j}
\frac1{n_1^{i_1}\cdots n_{j-1}^{i_{j-1}}(n_j+1)^2}=\\
\eta_{0,1,1}(M_{(i_1,\dots,i_{j-1})})-\zt(2,i_{j-1},\dots,i_1)+\sum_{1\le n_1<\dots<n_{j-1}}
\frac1{n_1^{i_1}\cdots n_{j-1}^{i_{j-1}}(n_{j-1}+1)^2}=\\
\eta_{0,1,1}(M_{(i_1,\dots,i_{j-1})})-\zt(2,i_{j-1},\dots,i_1)+
(\eta_{0,2}-\eta_{0,0,2})(M_{(i_1,\dots,i_{j-1})})
\end{multline*}
again using Theorem \ref{nome}.
\end{proof}
It follows that
\begin{multline*}
(\eta_{0,2}-\eta_{0,0,2})(e_k)=1-\zt(k+1)+(\eta_{0,2}-\eta_{0,0,2})(e_{k-1})\\
=\cdots=(k+1)-\zt(k+1)-\zt(k)-\cdots-\zt(2).
\end{multline*}
Similarly,
\[
(\eta_{0,2}-\eta_{0,0,2})(p_k)=\eta_{0,1,1}(p_k)-(\eta_{0,2}-\eta_{0,0,2})(p_{k-1}),
\]
which together with equation (\ref{omp}) implies
\[
(\eta_{0,2}-\eta_{0,0,2})(p_k)=\sum_{j=0}^{k-3}(-1)^j(j+1)\zt(k-j)
+(-1)^kk\zt(2)+(-1)^{k+1}(k+1) .
\]
In view of Corollary \ref{eta02pe}, the preceding equations imply
\[
\eta_{0,0,2}(e_k)=\sum_{j=2}^{k+2}\zt(j)-(k+1)
\]
and
\[
\eta_{0,0,2}(p_k)=\zt(2,k)+\sum_{j=0}^{k-3}(-1)^{j+1}(j+1)\zt(k-j)-(-1)^kk\zt(2)
+(-1)^k(k+1) .
\]
\subsection{General length}
For general length we have Theorem \ref{spiess}, which can be thought of as
generalizing $\eta_{0,1,1}(e_k)=1$.
\begin{proof6}
We use induction on $k$.  For the base case $k=0$ we must show
\[
\frac1{(q-1)!}\cdot\frac1{q-1}=\sum_{n=0}^\infty\frac1{(n+1)(n+2)\cdots (n+q)}
=\sum_{n=1}^\infty\frac1{n(n+1)\cdots (n+q-1)} .
\]
By \cite[Theorem 2]{HM} the right-hand side is
\begin{multline*}
H(\underbrace{1,\dots,1}_q)
=\frac1{(q-1)!}\sum_{i=0}^{q-2}\binom{q-2}{i}\frac{(-1)^i}{i+1}\\
=\frac1{(q-1)!}\cdot\frac1{q-1}\sum_{i=0}^{q-2}(-1)^i\binom{q-1}{i+1}
=\frac1{(q-1)!}\cdot\frac1{q-1} .
\end{multline*}
Now assume inductively that
\[
\eta_{0,\underbrace{\scriptstyle{1,\dots,1}}_q}(e_k)=
\frac1{(q-1)!}\cdot\frac1{(q-1)^{k+1}} .
\]
Then using partial fractions followed by telescoping we have
\begin{multline*}
\eta_{0,\underbrace{\scriptstyle{1,\dots,1}}_q}(e_{k+1})=
\sum_{1\le n_1<\dots<n_{k+2}}\frac1{n_1\cdots n_{k+1}n_{k+2}(n_{k+2}+1)\cdots
(n_{k+2}+q-1)}\\
=\frac1{q-1}\sum_{1\le n_1<\dots<n_{k+2}}\bigg[\frac1{n_1\cdots 
n_{k+1}n_{k+2}(n_{k+2}+1)\cdots (n_{k+2}+q-2)}\\
-\frac1{n_1\cdots n_{k+1}(n_{k+2}+1)\cdots (n_{k+2}+q-1)}\bigg]\\
=\frac1{q-1}\sum_{1\le n_1<\dots<n_{k+1}}\frac1{n_1\cdots n_{k+1}(n_{k+1}+1)
\cdots (n_{k+1}+q-1)}\\
=\frac1{q-1}\eta_{0,\underbrace{\scriptstyle{1,\dots,1}}_q}(e_k)
=\frac1{(q-1)!}\cdot\frac1{(q-1)^{k+2}} ,
\end{multline*}
where we used the induction hypothesis in the last step.
\qed
\end{proof6}
\par\noindent
\begin{rem} J. Spie\ss\ \cite[Theorem 15]{S} provides a formula
for the partial sum
\[
\sum_{n=0}^m\frac{P_k(H_n,H_n^{(2)},\dots,H_n^{(k)})}{(n+1)(n+2)\cdots (n+q)}
\]
which in the limit $m\to\infty$ gives Theorem \ref{spiess}.
\end{rem}
From Theorem \ref{spiess} we can obtain the following result, which generalizes 
the case $k=0$ of Theorems \ref{qpnn1} and \ref{eta111}.
\begin{cor}
For positive integers $k,q$ with $q>1$,
\[
\sum_{n=1}^\infty\frac{P_k(H_n,H_n^{(2)},\dots,H_n^{(k)})}{n(n+1)\cdots (n+q-1)}
=\frac1{(q-1)!}\left[\zt(k+1)-\sum_{j=1}^{q-2}\frac1{j^{k+1}}\right].
\]
\end{cor}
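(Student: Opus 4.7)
My plan is to proceed by induction on $q$. The base case $q = 2$ asserts
\[
\sum_{n=1}^\infty\frac{P_k(H_n,H_n^{(2)},\dots,H_n^{(k)})}{n(n+1)} = \zt(k+1),
\]
which is precisely the $l = 0$ specialization of Theorem \ref{qpnn1} (the right-hand side there being $\binom{k+1}{k+1}\zt(k+1) = \zt(k+1)$), and also matches the $q=2$ value $\frac{1}{1!}\zt(k+1)$ of the claimed formula.

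For the inductive step, fix $q \geq 3$ and suppose the identity holds at $q-1$. I would apply the partial-fraction identity
\[
\frac{1}{n(n+1)\cdots(n+q-1)} = \frac{1}{q-1}\left[\frac{1}{n(n+1)\cdots(n+q-2)} - \frac{1}{(n+1)(n+2)\cdots(n+q-1)}\right]
\]
term by term inside the sum. The first piece contributes $\sum_{n\geq 1}P_k(H_n,\dots,H_n^{(k)})/[n(n+1)\cdots(n+q-2)]$, which by the inductive hypothesis equals $\frac{1}{(q-2)!}\bigl[\zt(k+1)-\sum_{j=1}^{q-3}j^{-(k+1)}\bigr]$. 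For the second piece I would note that $P_k$ is homogeneous of positive weight (since $P_k(p_1,\dots,p_k) = e_k$), so $P_k(0,\dots,0) = 0$ for $k\geq 1$; consequently the $n=0$ term vanishes and the sum agrees with $\sum_{n=0}^\infty P_k(H_n,\dots,H_n^{(k)})/[(n+1)\cdots(n+q-1)]$, to which Theorem \ref{spiess} (with its $q$ replaced by $q-1$, legitimate since $q-1 \geq 2$) applies to give $1/[(q-2)!(q-2)^{k+1}]$. Inserting both evaluations inside the prefactor $\tfrac{1}{q-1}$ and absorbing $1/(q-2)^{k+1}$ as the new $j=q-2$ term of the harmonic sum delivers
\[
\frac{1}{(q-1)!}\left[\zt(k+1)-\sum_{j=1}^{q-2}\frac{1}{j^{k+1}}\right],
\]
completing the induction.

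The argument is essentially a bookkeeping exercise; the only step that calls for any comment is the reindexing of the second sum, which rests on the observation that $P_k$ has no constant term for $k\geq 1$. Once this is in hand, the identity follows by combining Theorems \ref{qpnn1} and \ref{spiess} through the standard partial-fraction telescope, with no analytic difficulty.
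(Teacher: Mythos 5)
Your proposal is correct and follows essentially the same route as the paper: the paper's proof is an induction on $q$ with base case the $l=0$ instance of Theorem \ref{qpnn1}, using Proposition \ref{diff} (which is exactly your partial-fraction identity, phrased as $\eta_{1,\dots,1}=\frac1{q-1}(\eta_{1,\dots,1}-\eta_{0,1,\dots,1})$) and evaluating the second term via Theorem \ref{spiess}. The only cosmetic difference is that the paper steps from $q$ to $q+1$ rather than from $q-1$ to $q$, and your explicit remark that $P_k(0,\dots,0)=0$ justifies the reindexing the paper leaves implicit.
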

\begin{proof}
It suffices to show that
\begin{equation}
\label{etind}
\eta_{\underbrace{\scriptstyle{1,\dots,1}}_q}(e_k)=
\frac1{(q-1)!}\left[\zt(k+1)-\sum_{j=1}^{q-2}\frac1{j^{k+1}}\right].
\end{equation}
We prove this by induction on $q$, the base case $q=2$ being
the case $l=0$ of Theorem \ref{qpnn1}.  Assume inductively that
equation (\ref{etind}) holds.  By Proposition \ref{diff},
\[
\eta_{\underbrace{\scriptstyle{1,\dots,1}}_{q+1}}(e_k)=
\frac1{q}\left[\eta_{\underbrace{\scriptstyle{1,\dots,1}}_q}(e_k)-
\eta_{0,\underbrace{\scriptstyle{1,\dots,1}}_q}(e_k)\right] .
\]
Using the induction hypothesis (\ref{etind}) and Theorem \ref{spiess},
this can be written
\[
\frac1{q!}\left[\zt(k+1)-\sum_{j=1}^{q-2}\frac1{j^{k+1}}
-\frac1{(q-1)^{k+1}}\right]=\frac1{q!}\left[\zt(k+1)-
\sum_{j=1}^{q-1}\frac1{j^{k+1}}\right] .
\]
\end{proof}

\end{document}